\documentclass[12pt,a4paper,oneside]{article}
\usepackage{amsmath,amssymb,amsthm}
\usepackage{amsfonts}
\usepackage{amssymb}
\usepackage{graphicx}
\usepackage{amsthm}
\usepackage{newlfont}
\linespread{1.5}
\newlength{\defbaselineskip}
\setlength{\defbaselineskip}{\baselineskip}
\newcommand{\setlinespacing}[1]%
          {\setlength{\baselineskip}{#1 \defbaselineskip}}

\theoremstyle{plain}
\newtheorem{theorem}{Theorem}[section]

\newtheorem{lemma}[theorem]{Lemma}

\theoremstyle{definition}

\newtheorem{remarks}[theorem]{Remark}
\newtheorem{example}[theorem]{Example}

\begin{document}
\begin{center}
\vspace{3cm}
 {\bf \large Group inverses of $\{0,1\}$-triangular matrices and Fibonacci numbers}\\
\vspace{3cm}

{\bf Manami Chatterjee and K.C. Sivakumar}\\
Department of Mathematics\\
Indian Institute of Technology Madras\\
Chennai 600 036, India. \\
\end{center}

\begin{abstract}
A number $s$ is the sum of the entries of the inverse of an $n \times n, (n \geq 3)$ upper triangular matrix with entries from the set $\{0, 1\}$ if and only if $s$ is an integer lying between $2-F_{n-1}$ and $2+F_{n-1}$, where $F_n$ is the $n$th Fibonacci number. A generalization of the sufficient condition above to singular, group invertible matrices is presented.
\end{abstract}

{\bf AMS Subject Classification (2010):} 15A09, 15B36.

{\bf Keywords:} Fibonacci numbers, upper triangular matrix, group inverse.

\newpage
\section{Introduction}
In order to state the main result of this article, we need the notion of the group inverse. Recall that for a matrix $A$ of order $n \times n$ with real entries, the group inverse, if it exists, is the unique matrix $X$ of order $n \times n$ that satisfies the matrix equations $AXA=A, XAX=X$ and $AX=XA$. The group inverse is denoted by $A^{\#}$. A necessary sufficient condition for the group inverse to exist is the condition that $rank(A)=rank(A^2)$. If $A$ is the matrix of order $n \times n$ each of whose entries equals $1$, then $A^{\#}=\frac{1}{n^2}A$, whereas the group inverse does not exist for any nilpotent matrix. For more details, we refer the reader to \cite{bengre}.

Let $S(X)$ denote the sum of the entries of a matrix $X$. Huang, Tam and Wu \cite{Huang} showed that a number $s$ is equal to $S(A^{-1})$ for a symmetric $(0,1)$ matrix $A$ with trace zero if and only if $s$ is rational. Motivated by this work, Farber and Berman \cite{Far} presented an interesting relationship between Fibonacci numbers and matrix theory, thereby providing a partial answer to the question ``what can be said about the sum of the entries of the inverse of a $(0,1)$ matrix?''. They proved: A number $s$ is the sum of the entries of the inverse of an $n \times n, (n \geq 3)$ upper triangular matrix with entries from $\{0, 1\}$ if and only if $s$ is an integer between $2-F_{n-1}$ and $2+F_{n-1}$.  

In this article, we prove that the sufficient condition stated above has a nice extension to singular, group invertible, upper triangular matrices with entries from $\{0, 1\}$. This sufficient condition is also shown to be not a necessary one. 

\section{Preliminary Results}
First, we collect some prelilminary results involving inverses of matrices whose entries come from $\{0,1\}$. The inverses have as their entries, Fibonacci numbers. It is well known that, frequently, computations involving Fibonacci numbers
are long and tedious. Our experience is similar and so we have postponed the proofs of the basic results to the Appendix. The following basic formulae will be used quite often: $$1+{\sum_{i=1}^{n}{F_{2i}}}=F_{2n+1} \ \  and \ \ \sum_{i=1}^{n}{F_{2i-1}}=F_{2n}.$$ 

In the first four results, we present formulae for four matrices and their inverses. These will play a crucial role in our discussion. 

\begin{lemma}\label{c1}
Let a square matrix $C_1 $ of order $n-1$ ($n\geq 6$, $n$ even) be defined as:
\begin{eqnarray}
 C_1 e^1=e^1,\quad C_1 e^2=e^2.\label{c1e1} 
 \end{eqnarray}
 For odd $k, ~3 \leq k \leq n-3$, let 
 \begin{eqnarray}
 C_1 e^k&=&e^1+\sum_{i=1}^{\frac{k-1}{2}}e^{2i}+e^k \label{c1odd}
 \end{eqnarray}
 and for $k$ even, $4\leq k \leq n-2$, let 
  \begin{eqnarray}
 C_1 e^k&=&\sum_{i=1}^{\frac{k-2}{2}}e^{2i+1}+e^k. \label{c1even}   
\end{eqnarray}
Finally, let 
\begin{eqnarray}
C_1 e^{n-1}&=&\sum_{i=1}^{\frac{n-4}{2}}e^{2i+1}+e^{n-1}.\label{c1(n-1)}
\end{eqnarray}
Let $X$ be square of order ${n-1}$ such that
\begin{equation}
Xe^1=e^1,\quad Xe^2=e^2.\label{X1e1}
\end{equation}
For odd $k, ~3\leq k\leq n-3$, let
\begin{equation}
Xe^k = -F_{k-2} e^1 + \sum_{i=2}^{k-1}{(-1)^{i+1}F_{k-i}e^i + e^k}, \label{X1odd}
\end{equation}
for even $k, ~4\leq k\leq n-2$, let
\begin{equation}
Xe^k=F_{k-2} e^1 + \sum_{i=2}^{k-1}{(-1)^{i}F_{k-i}e^i + e^k}\label{X1even}
\end{equation}
and let
\begin{equation}
Xe^{n-1}=F_{n-4} e^1 + \sum_{i=2}^{n-3}{(-1)^{i}F_{n-2-i}e^i + e^{n-1}}.\label{X1(n-1)}
\end{equation}
Then, $X = C_1^{-1}$.
\end{lemma}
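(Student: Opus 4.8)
The plan is to verify directly that $C_1 X = I$; since $C_1$ and $X$ are both square of order $n-1$, this already forces $X = C_1^{-1}$. Equivalently, I would show $C_1(Xe^k) = e^k$ for every $k$ with $1 \le k \le n-1$, by expanding $C_1(Xe^k)$ by linearity from \eqref{X1e1}--\eqref{X1(n-1)} and then applying $C_1$ to each $e^i$ that occurs, using \eqref{c1e1}--\eqref{c1(n-1)}. The cases $k = 1$ and $k = 2$ are immediate from \eqref{c1e1} and \eqref{X1e1}. For the rest, note that both $C_1$ and $X$ are upper triangular with every diagonal entry equal to $1$ (read off from \eqref{c1odd}--\eqref{c1(n-1)} and \eqref{X1odd}--\eqref{X1(n-1)}); hence in $C_1(Xe^k)$ the coefficient of $e^k$ is automatically $1$ and the coefficient of $e^j$ vanishes for $j > k$, so the entire content is that the coefficients of $e^1, \dots, e^{k-1}$ cancel.

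For a fixed $j$ with $1 \le j < k$ I would read off from \eqref{c1odd}--\eqref{c1even} that $e^j$ occurs in $C_1 e^i$ exactly for the index $i = j$ and for those $i$ of parity opposite to $j$ with $i > j$ (and, when $j = 1$, also for every odd $i$, including $i = k$ when $k$ is odd); consequently the coefficient of $e^j$ in $C_1(Xe^k)$ is the coefficient of $e^j$ in $Xe^k$ plus a signed sum of the coefficients of the $e^i$ in $Xe^k$ over that index set. Once the parity of $i$ is fixed, the signs $(-1)^{i+1}$, resp.\ $(-1)^i$, in \eqref{X1odd}--\eqref{X1even} are constant, so each column collapses to an expression of the form $\pm F_{k-j} \mp \sum_t F_{2t-1}$ or $\pm F_{k-j} \mp \sum_t F_{2t} \mp 1$, which vanishes by one of the two identities $\sum_{t=1}^m F_{2t-1} = F_{2m}$ and $1 + \sum_{t=1}^m F_{2t} = F_{2m+1}$ quoted above; the anomalous coefficient $\mp F_{k-2}$ attached to $e^1$ in \eqref{X1odd}--\eqref{X1even} (in place of the value the general pattern would predict) is exactly what is needed for the $j = 1$ column to close up.

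Finally, the column $k = n-1$ can be deduced from the column $k = n-2$ rather than redone from scratch: comparing \eqref{X1(n-1)} with \eqref{X1even} and \eqref{c1(n-1)} with \eqref{c1even}, both taken at $k = n-2$ (legitimate since $n \ge 6$), gives $Xe^{n-1} = Xe^{n-2} - e^{n-2} + e^{n-1}$ and $C_1 e^{n-1} = C_1 e^{n-2} - e^{n-2} + e^{n-1}$, whence $C_1(Xe^{n-1}) = C_1(Xe^{n-2}) - C_1 e^{n-2} + C_1 e^{n-1} = e^{n-2} - e^{n-2} + e^{n-1} = e^{n-1}$. I expect the only real obstacle to be organisational: in each of the parity cases one must keep straight precisely which indices $i$ contribute to a given column $j$, what the resulting constant sign is, and which of the two Fibonacci summation identities to invoke. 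There is no conceptual difficulty once the cases are laid out, which is presumably why the authors relegate this verification to the Appendix.
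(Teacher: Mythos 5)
Your proposal is correct and follows essentially the same route as the paper's Appendix proof: expand $C_1(Xe^k)$ by linearity and collapse each coordinate using the identities $\sum_{t=1}^{m}F_{2t-1}=F_{2m}$ and $1+\sum_{t=1}^{m}F_{2t}=F_{2m+1}$. The only differences are cosmetic: you rightly observe that verifying $C_1X=I$ alone suffices for square matrices (the paper additionally checks $XC_1=I$), and your reduction of the column $k=n-1$ to the column $k=n-2$ is a small shortcut the paper does not use.
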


\begin{example}\label{c1ex}
We illustrate the lemma above, by two examples. For $n=6$,\\
$$C_1=\left( \begin{array}{ccccc}
1 & 0 & 1 & 0 & 0\\
0 & 1 & 1 & 0 & 0\\
0 & 0 & 1 & 1 & 1\\
0 & 0 & 0 & 1 & 0\\
0 & 0 & 0 & 0 & 1
\end{array} \right) \ \ and \ \  C_1^{-1}=\left( \begin{array}{ccccc}
1 & 0 & -F_1 & F_2 & F_2\\
0 & 1 & -F_1 & F_2 & F_2\\
0 & 0 & 1 & -F_1 & -F_1\\
0 & 0 & 0 & 1 & 0\\
0 & 0 & 0 & 0 & 1
\end{array} \right).$$

For $n=8$,\\
$$C_1=\left( \begin{array}{ccccccc}
1 & 0 & 1 & 0 & 1 & 0 & 0\\
0 & 1 & 1 & 0 & 1 & 0 & 0\\
0 & 0 & 1 & 1 & 0 & 1 & 1\\
0 & 0 & 0 & 1 & 1 & 0 & 0\\
0 & 0 & 0 & 0 & 1 & 1 & 1\\
0 & 0 & 0 & 0 & 0 & 1 & 0\\
0 & 0 & 0 & 0 & 0 & 0 & 1
\end{array} \right),$$

while

$$C_1^{-1}=\left( \begin{array}{ccccccc}
1 & 0 & -F_1 & F_2 & -F_3 & F_4 & F_4\\
0 & 1 & -F_1 & F_2 & -F_3 & F_4 & F_4\\
0 & 0 & 1 & -F_1 & F_2 & -F_3 & -F_3\\
0 & 0 & 0 & 1 & -F_1 & F_2 & F_2\\
0 & 0 & 0 & 0 & 1 & -F_1 & -F_1\\
0 & 0 & 0 & 0 & 0 & 1 & 0\\
0 & 0 & 0 & 0 & 0 & 0 & 1
\end{array} \right).$$
\end{example}

\begin{lemma}\label{c2}
Let $ C_2 $ be a square matrix of order $n-1$ ($n\geq 6$, $n$ is odd) defined as:
\begin{eqnarray}
 C_2 e^1=e^1, \quad  C_2 e^2=e^2.\label{c2e1}
 \end{eqnarray}
 For odd $k,~ 3\leq k\leq n-4$, let
 \begin{equation}
 C_2 e^k=e^1+\sum_{i=1}^{\frac{k-1}{2}}e^{2i}+e^k \label{c2odd}
 \end{equation}
 and for even $k,~4\leq k \leq n-3$ let
 \begin{equation}
 C_2 e^k=\sum_{i=1}^{\frac{k-2}{2}}e^{2i+1}+e^k.\label{c2even} 
 \end{equation}
 Also let
 \begin{eqnarray}
 C_2 e^{n-2}&=&\sum_{i=1}^{\frac{n-5}{2}}e^{2i+1}+e^{n-2}\label{c2(n-2)}
 \end{eqnarray}
 and
 \begin{eqnarray}
  C_2 e^{n-1}&=&\sum_{i=1}^{\frac{n-5}{2}}e^{2i+1}+e^{n-1}.\label{c2(n-1)}
\end{eqnarray}
 Let $X$ be of order ${n-1}$ and be defined such that 
\begin{eqnarray}
 Xe^1=e^1,\quad Xe^2=e^2.\label{X2e1}
 \end{eqnarray}
 For odd $k,~ 3\leq k\leq n-4$, let
 \begin{eqnarray}
 Xe^k&=&-F_{k-2} e^1 + \sum_{i=2}^{k-1}{(-1)^{i+1}F_{k-i}e^i} + e^k\label{X2odd}
 \end{eqnarray}
 and for even $k,~4\leq k \leq n-3$ let
 \begin{eqnarray}
 Xe^k&=&F_{k-2} e^1 + \sum_{i=2}^{k-1}{(-1)^{i}F_{k-i}e^i + e^k}.\label{X2even}
 \end{eqnarray}
 Further let
\begin{eqnarray}
 Xe^{n-2}&=&F_{n-5} e^1 + \sum_{i=2}^{n-4}{(-1)^{i}F_{n-3-i}e^i + e^{n-2}}\label{X2(n-2)}
 \end{eqnarray}
 and 
 \begin{eqnarray}
 Xe^{n-1}&=&F_{n-5} e^1 + \sum_{i=2}^{n-4}{(-1)^{i}F_{n-3-i}e^i + e^{n-1}}.\label{X2(n-1)}
 \end{eqnarray}
 Then $X = C_2^{-1}$.
\end{lemma}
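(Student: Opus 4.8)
\noindent\emph{Proof strategy.} The plan is to obtain this lemma from Lemma \ref{c1} by recognizing $C_2$ as a one-column, one-row bordering of the matrix appearing there, rather than by a fresh column-by-column verification. Put $m:=n-1$; since $n$ is odd with $n\ge 6$ we in fact have $n\ge 7$, so $m$ is even with $m\ge 6$ and Lemma \ref{c1} applies with its parameter equal to $m$. Let $\widetilde C$ denote the resulting matrix, of order $m-1=n-2$, and $\widetilde C^{-1}$ its inverse as described there.

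The first step would be to compare the defining relations \eqref{c2e1}--\eqref{c2(n-1)} of $C_2$ with \eqref{c1e1}--\eqref{c1(n-1)} for $\widetilde C$ (read with $n$ replaced by $m$), and to verify that the leading $(n-2)\times(n-2)$ principal submatrix of $C_2$ is exactly $\widetilde C$, that the last row of $C_2$ is $(0,\dots,0,1)$, and that the last column of $C_2$ is $v+e^{n-1}$, where $v:=\sum_{i=1}^{(n-5)/2}e^{2i+1}$. This is immediate once the index ranges are lined up: because $m$ is even, the generic odd columns of $\widetilde C$ run over $3\le k\le m-3=n-4$ and the generic even ones over $4\le k\le m-2=n-3$, matching \eqref{c2odd}--\eqref{c2even}; equations \eqref{c2e1}--\eqref{c2even} then coincide with \eqref{c1e1}--\eqref{c1even}, while \eqref{c2(n-2)} for $C_2e^{n-2}$ is precisely \eqref{c1(n-1)} for the last column $\widetilde C e^{\,m-1}=\widetilde C e^{\,n-2}$. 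The only column left is $C_2e^{n-1}$ from \eqref{c2(n-1)}, which equals $v+e^{n-1}$, and \eqref{c1(n-1)} shows $v=\widetilde C e^{\,n-2}-e^{n-2}$. One concludes, in block form,
\[
 C_2=\begin{pmatrix}\widetilde C & v\\[2pt] 0 & 1\end{pmatrix}.
\]

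Since $\widetilde C$ is upper triangular with unit diagonal, it is invertible, and the block-triangular inversion formula gives
\[
 C_2^{-1}=\begin{pmatrix}\widetilde C^{-1} & w\\[2pt] 0 & 1\end{pmatrix},\qquad
 w=-\widetilde C^{-1}v=-\widetilde C^{-1}\bigl(\widetilde C e^{\,n-2}-e^{n-2}\bigr)=\widetilde C^{-1}e^{\,n-2}-e^{n-2}.
\]
One then reads the columns of $\widetilde C^{-1}$ off Lemma \ref{c1} with parameter $m=n-1$: for $1\le k\le n-3$ the formulas \eqref{X1e1}--\eqref{X1even} are word-for-word \eqref{X2e1}--\eqref{X2even}; and \eqref{X1(n-1)} yields $\widetilde C^{-1}e^{\,n-2}=F_{n-5}e^1+\sum_{i=2}^{n-4}(-1)^iF_{n-3-i}e^i+e^{n-2}$, whence $w=F_{n-5}e^1+\sum_{i=2}^{n-4}(-1)^iF_{n-3-i}e^i$. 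Comparing columns in the block form of $C_2^{-1}$: for $k\le n-3$ its $k$-th column is $\widetilde C^{-1}e^k$ with a $0$ appended, equal to $Xe^k$ by \eqref{X2e1}--\eqref{X2even}; its $(n-2)$-th column is $\widetilde C^{-1}e^{\,n-2}$ with a $0$ appended, equal to $Xe^{n-2}$ by \eqref{X2(n-2)}; and its $(n-1)$-th column is $w$ with a $1$ appended, equal to $Xe^{n-1}$ by \eqref{X2(n-1)}. Hence $X=C_2^{-1}$, as claimed.

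The argument is essentially bookkeeping; the one step needing care is the index matching in the second paragraph, namely checking that each parity/range clause among \eqref{c2e1}--\eqref{c2(n-1)} and among \eqref{X2e1}--\eqref{X2(n-1)} is the $m=n-1$ specialization of the corresponding clause in Lemma \ref{c1}. The smallest admissible case, $n=7$ (so $m=6$), sits exactly on the boundary of Lemma \ref{c1} and can be checked by hand against Example \ref{c1ex}, giving $C_2=\left(\begin{smallmatrix}\widetilde C & e^3\\ 0 & 1\end{smallmatrix}\right)$ with $\widetilde C$ the $n=6$ matrix of that example. No Fibonacci identities are invoked here beyond those already used to establish Lemma \ref{c1}; a self-contained alternative would instead verify $C_2X=I$ directly, expanding $C_2(Xe^k)=\sum_i c_i\,C_2e^i$ and collecting the coefficient of each $e^j$, where the coefficient of $e^1$ is the delicate one and is where the summation formulae $1+\sum_{i=1}^{k}F_{2i}=F_{2k+1}$ and $\sum_{i=1}^{k}F_{2i-1}=F_{2k}$ would enter.
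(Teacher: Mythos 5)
Your proposal is correct, and it takes a genuinely different route from the paper. The paper proves Lemma \ref{c1} by a direct column-by-column verification that $C_1X=XC_1=I$, telescoping Fibonacci sums via $1+\sum F_{2i}=F_{2n+1}$ and $\sum F_{2i-1}=F_{2n}$, and then simply declares the proofs of Lemmas \ref{c2}--\ref{c4} to be ``similar and skipped'' --- i.e.\ the intended argument for $C_2$ is another verification of the same computational kind. You instead observe that, because $n$ odd forces $m:=n-1$ to be even with $m\ge 6$, the matrix $C_2$ is exactly the one-column bordering
$\bigl(\begin{smallmatrix}\widetilde C & v\\ 0 & 1\end{smallmatrix}\bigr)$
of the order-$(n-2)$ matrix $\widetilde C$ of Lemma \ref{c1}, with $v=\widetilde C e^{n-2}-e^{n-2}$, so the block-triangular inversion formula hands you $C_2^{-1}$ for free; the index bookkeeping you flag ($m-3=n-4$, $m-2=n-3$, $m-1=n-2$, $(m-4)/2=(n-5)/2$) all checks out, as does the resulting last column $w+e^{n-1}=F_{n-5}e^1+\sum_{i=2}^{n-4}(-1)^iF_{n-3-i}e^i+e^{n-1}$ against \eqref{X2(n-1)} (and against Example \ref{c2ex} for $n=7$). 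What your approach buys is the elimination of a second round of Fibonacci telescoping --- all the hard identities are used once, in Lemma \ref{c1}, and Lemma \ref{c2} becomes pure structure; it also explains conceptually why the last two columns of $C_2^{-1}$ coincide off the diagonal. What the paper's approach buys is uniformity: the same verification template applies verbatim to $C_3$ and $C_4$, whereas your reduction exploits the specific fact that $C_2$'s column scheme nests inside $C_1$'s, and the analogous reductions for $C_3$ and $C_4$ would need a slightly different (two-column, or differently based) bordering rather than being automatic.
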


\begin{example}\label{c2ex}
Let us give an example in support of Lemma (\ref{c2}). For $n=7$,\\
$$C_2=\left( \begin{array}{cccccc}
1 & 0 & 1 & 0 & 0 & 0\\
0 & 1 & 1 & 0 & 0 & 0\\
0 & 0 & 1 & 1 & 1 & 1\\
0 & 0 & 0 & 1 & 0 & 0\\
0 & 0 & 0 & 0 & 1 & 0\\
0 & 0 & 0 & 0 & 0 & 1
\end{array} \right),$$ while

$$C_2^{-1}=\left( \begin{array}{cccccc}
1 & 0 & -F_1 & F_2 & F_2 & F_2\\
0 & 1 & -F_1 & F_2 & F_2 & F_2\\
0 & 0 & 1 & -F_1 & -F_1 & -F_1\\
0 & 0 & 0 & 1 & 0 & 0\\
0 & 0 & 0 & 0 & 1 & 0\\
0 & 0 & 0 & 0 & 0 & 1
\end{array} \right).$$
\end{example}

\begin{lemma}\label{c3}
Let $C_3$ be a square matrix of order $n-1$ ($n\geq 6$) with $n$ being even, defined as:
\begin{eqnarray}
 C_3 e^1=e^1, \quad C_3 e^2=e^2.\label{c3e1}
 \end{eqnarray}
 For odd $k,~  3\leq k \leq n-3$, let
 \begin{eqnarray}
 C_3 e^k&=&e^1+\sum_{i=1}^{\frac{k-1}{2}}e^{2i}+e^k \label{c3odd}
 \end{eqnarray}
 and for even $k,~ 4\leq k \leq n-4$, let
 \begin{eqnarray}
 C_3 e^k&=&\sum_{i=1}^{\frac{k-2}{2}}e^{2i+1}+e^k.\label{c3even}
 \end{eqnarray}
 Also let
 \begin{eqnarray}
 C_3 e^{n-2}&=&e^1+\sum_{i=1}^{\frac{n-4}{2}}e^{2i}+e^{n-2}\label{c3(n-2)}
 \end{eqnarray}
 and 
 \begin{eqnarray}
  C_3 e^{n-1}&=&e^1+\sum_{i=1}^{\frac{n-4}{2}}e^{2i}+e^{n-1}.\label{c3(n-1)}
 \end{eqnarray}
 Let $Y$ be defined as 
\begin{eqnarray}
 Ye^1=e^1,\quad Ye^2=e^2.\label{Y1e1}
 \end{eqnarray}
 For odd $k,~3\leq k \leq n-3$
 \begin{eqnarray}
 Ye^k&=&-F_{k-2} e^1 + \sum_{i=2}^{k-1}{(-1)^{i+1}F_{k-i}e^i} + e^k\label{Y1odd}
 \end{eqnarray}
 and for even $k,~4\leq k \leq n-4$
 \begin{eqnarray}
 Ye^k&=&F_{k-2} e^1 + \sum_{i=2}^{k-1}{(-1)^{i}F_{k-i}e^i + e^k}.\label{Y1even}
 \end{eqnarray}
 Further let
 \begin{eqnarray}
 Ye^{n-2}&=&-F_{n-5} e^1 + \sum_{i=2}^{n-4}{(-1)^{i+1}F_{n-3-i}e^i + e^{n-2}}\label{Y1(n-2)}
 \end{eqnarray}
 and 
 \begin{eqnarray}
 Ye^{n-1}&=&-F_{n-5} e^1 + \sum_{i=2}^{n-4}{(-1)^{i+1}F_{n-3-i}e^i + e^{n-1}}.\label{Y1(n-1)}
 \end{eqnarray}
 Then $Y = C_3^{-1}$.
 \end{lemma}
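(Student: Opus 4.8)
The plan is to verify directly that $YC_3=I$. Since $C_3$ and $Y$ are both upper triangular with unit diagonal, this is enough: a square matrix possessing a one-sided inverse is invertible, and the one-sided inverse is then the inverse, so $Y=C_3^{-1}$ will follow. The observation that makes the verification tractable is that each column $C_3e^k$ has all entries in $\{0,1\}$, so, writing $S_k$ for its support,
\[
YC_3e^k=\sum_{j\in S_k}Ye^j,
\]
and every vector $Ye^j$ on the right is one of those listed in \eqref{Y1e1}, \eqref{Y1odd}, \eqref{Y1even}, \eqref{Y1(n-2)}, \eqref{Y1(n-1)}. Thus the whole proof comes down to the following: for each $k$, substitute these formulas and check, coordinate by coordinate, that the sum equals $e^k$. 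The only tools I expect to need are the two summation identities $1+\sum_{i=1}^{m}F_{2i}=F_{2m+1}$ and $\sum_{i=1}^{m}F_{2i-1}=F_{2m}$ recorded above, together with the Fibonacci recurrence.

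The cases $k=1,2$ are immediate. For an odd index $k$ with $3\le k\le n-3$, formula \eqref{c3odd} gives $S_k=\{1,2,4,\dots,k-1\}\cup\{k\}$, so $YC_3e^k$ is the sum of $Ye^1$, $Ye^2$, the even-indexed vectors $Ye^{2i}$ taken from \eqref{Y1even}, and $Ye^k$ taken from \eqref{Y1odd}. I would then read off coefficients: the coefficient of $e^1$ is $1+\sum_{i=2}^{(k-1)/2}F_{2i-2}-F_{k-2}$, which vanishes because $\sum_{i=2}^{(k-1)/2}F_{2i-2}=\sum_{i=1}^{(k-3)/2}F_{2i}=F_{k-2}-1$ by the first identity; for each fixed $p$ with $2\le p\le k-1$ the coefficient of $e^p$ reduces, according to the parity of $p$, to an instance of one of the two summation identities and collapses to $0$; the coefficient of $e^k$ is $1$; and no $e^p$ with $p>k$ appears, since $Y$ is upper triangular. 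An even index $k$ with $4\le k\le n-4$ is treated the same way, now using \eqref{c3even} for $S_k$, \eqref{Y1odd} for the odd-indexed vectors and \eqref{Y1even} for $Ye^k$.

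The part I expect to be the main obstacle, and really the only delicate point, is the tail $k\in\{n-2,n-1\}$. By \eqref{c3(n-2)} and \eqref{c3(n-1)} these two columns carry the \emph{odd}-type support $\{1,2,4,\dots,n-4\}\cup\{k\}$ even though $n-2$ is even, and the matching vectors $Ye^{n-2}$, $Ye^{n-1}$ are the special ones \eqref{Y1(n-2)}, \eqref{Y1(n-1)}, whose Fibonacci subscripts $F_{n-3-i}$ are shifted relative to the generic $F_{k-i}$; so the index bookkeeping differs and must be done with care. I would handle $k=n-2$ and $k=n-1$ in parallel: after substitution the coefficient of $e^1$ is $1+\sum_{i=2}^{(n-4)/2}F_{2i-2}-F_{n-5}=1+(F_{n-5}-1)-F_{n-5}=0$; the coefficient of any $e^p$ with $2\le p\le n-4$ again reduces to one of the two summation identities and cancels; and the coefficient of $e^{n-2}$ (respectively $e^{n-1}$) is $1$. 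This disposes of every column, giving $YC_3=I$ and hence $Y=C_3^{-1}$. As an alternative route one could note that $C_3$ agrees with the matrix $C_1$ of Lemma~\ref{c1} except in its last two columns and obtain $C_3^{-1}$ from $C_1^{-1}$ by a rank-two update formula; but the direct verification above seems the more transparent option, the Fibonacci bookkeeping concentrated in the tail being the only nuisance.
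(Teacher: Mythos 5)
Your proposal is correct and follows essentially the same route as the paper: the paper proves Lemma~\ref{c1} by a column-by-column verification of $C_1X=I$ and $XC_1=I$ using the two Fibonacci summation identities, and states that the proof of Lemma~\ref{c3} is similar and is skipped; your computation of $YC_3e^k$ is exactly that verification adapted to $C_3$, with the correct handling of the tail columns $n-2$, $n-1$. Your only (legitimate) economy is checking one side and invoking the fact that a one-sided inverse of a square matrix is the inverse, whereas the paper checks both products.
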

 
\begin{example}\label{c3ex}
Here is an illustration for $n=8$.\\
$$C_3=\left( \begin{array}{ccccccc}
1 & 0 & 1 & 0 & 1 & 1 & 1\\
0 & 1 & 1 & 0 & 1 & 1 & 1\\
0 & 0 & 1 & 1 & 0 & 0 & 0\\
0 & 0 & 0 & 1 & 1 & 1 & 1\\
0 & 0 & 0 & 0 & 1 & 0 & 0\\
0 & 0 & 0 & 0 & 0 & 1 & 0\\
0 & 0 & 0 & 0 & 0 & 0 & 1
\end{array} \right)$$ and 
$$C_3^{-1}=\left( \begin{array}{ccccccc}
1 & 0 & -F_1 & F_2 & -F_3 & -F_3 & -F_3\\
0 & 1 & -F_1 & F_2 & -F_3 & -F_3 & -F_3\\
0 & 0 & 1 & -F_1 & F_2 & F_2 & F_2\\
0 & 0 & 0 & 1 & -F_1 & -F_1 & -F_1\\
0 & 0 & 0 & 0 & 1 & 0 & 0\\
0 & 0 & 0 & 0 & 0 & 1 & 0\\
0 & 0 & 0 & 0 & 0 & 0 & 1
\end{array} \right).$$
\end{example}

\begin{lemma}\label{c4}
Let $C_4$ be a square matrix of order $n-1$ ($n\geq 6$, $n$ odd) be defined as:
\begin{eqnarray}
 C_4 e^1=e^1, \quad C_4 e^2=e^2.
 \end{eqnarray}
 For odd $k,~3\leq k \leq n-2$, let
 \begin{eqnarray}
 C_4 e^k&=&e^1+\sum_{i=1}^{\frac{k-1}{2}}e^{2i}+e^k,
 \end{eqnarray}
 for even $k,~4\leq k \leq n-3$, let
 \begin{eqnarray}
 C_4 e^k&=& \sum_{i=1}^{\frac{k-2}{2}}e^{2i+1}+e^k
 \end{eqnarray}
 and
 \begin{eqnarray}
 C_4 e^{n-1}&=&e^1+\sum_{i=1}^{\frac{n-3}{2}}e^{2i}+e^{n-1}.
 \end{eqnarray}
Let $Y$ of order ${n-1}$ be defined such that 
  \begin{eqnarray}
 Ye^1=e^1,\quad Ye^2=e^2.
 \end{eqnarray}
 For odd $k,~3\leq k \leq n-2$, let
 \begin{eqnarray}
 Ye^k&=&-F_{k-2} e^1 + \sum_{i=2}^{k-1}{(-1)^{i+1}F_{k-i}e^i}+ e^k,
 \end{eqnarray}
 for even $k,~4\leq k \leq n-3$, let
 \begin{eqnarray}
 Ye^k&=&F_{k-2} e^1 + \sum_{i=2}^{k-1}(-1)^{i}F_{k-i}e^i+e^k
 \end{eqnarray}
 and
 \begin{eqnarray}
Ye^{n-1}&=&-F_{n-4} e^1 + \sum_{i=2}^{n-3}{(-1)^{i+1}F_{n-2-i}e^i + e^{n-1}}.
\end{eqnarray}
Then, $Y = C_4^{-1}$.
 \end{lemma}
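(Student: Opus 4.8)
My plan is to verify directly that $Y C_4 = I$. Since $C_4$ is upper triangular with $1$'s on the diagonal it is invertible, and a one-sided inverse of a square matrix is automatically its inverse, so it will be enough to check that $Y(C_4 e^k) = e^k$ for every $k \in \{1,\dots,n-1\}$. This is the convenient direction: each column $C_4 e^k$ is displayed as an explicit $\{0,1\}$-combination of standard basis vectors, so $Y(C_4 e^k)$ is just the corresponding sum of the columns $Y e^j$, which are themselves written out explicitly; the whole proof thus comes down to collecting Fibonacci coefficients one coordinate at a time.

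I would first dispose of the trivial columns $k=1,2$, where $C_4 e^k = e^k = Y e^k$. Next I would reduce the boundary column $k=n-1$ to the generic case: the definitions give $C_4 e^{n-1} = C_4 e^{n-2} - e^{n-2} + e^{n-1}$ and $Y e^{n-1} = Y e^{n-2} - e^{n-2} + e^{n-1}$, so once $Y(C_4 e^{n-2}) = e^{n-2}$ is established, linearity yields
$$Y(C_4 e^{n-1}) = Y(C_4 e^{n-2}) - Y e^{n-2} + Y e^{n-1} = e^{n-2} - (e^{n-2}-e^{n-1}) = e^{n-1}.$$
This leaves the generic columns, namely odd $k$ with $3 \le k \le n-2$ and even $k$ with $4 \le k \le n-3$; for these every index occurring in $C_4 e^k$ again lies in the interior ranges governing the columns of $Y$, so the interior formulas for $Y$ will apply uniformly.

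For a generic odd $k$ I would expand
$$Y(C_4 e^k) = Y e^1 + \sum_{i=1}^{(k-1)/2} Y e^{2i} + Y e^k,$$
plug in the interior formulas, and compute the coefficient of each $e^m$. The coefficient of $e^k$ is visibly $1$, and of $e^m$ with $m>k$ it is $0$. The coefficient of $e^1$ should come out as $1 + \sum_{\ell=1}^{(k-3)/2} F_{2\ell} - F_{k-2}$, which vanishes by $1 + \sum_{i=1}^{m} F_{2i} = F_{2m+1}$. For an interior coordinate $e^m$, $2 \le m \le k-1$, I would split on the parity of $m$: for $m$ odd the surviving terms should collapse to $-\sum_{\ell=1}^{(k-m)/2} F_{2\ell-1} + F_{k-m} = 0$ via $\sum_{i=1}^{m} F_{2i-1} = F_{2m}$, and for $m$ even to $1 + \sum_{\ell=1}^{(k-1-m)/2} F_{2\ell} - F_{k-m} = 0$ via the first identity, the stray $1$ being the diagonal entry of $Y e^m$, which is now itself a summand. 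The generic even $k$ would be treated identically, starting from $C_4 e^k = \sum_{i=1}^{(k-2)/2} e^{2i+1} + e^k$, with the two summation identities exchanging roles according to the parity of the coordinate. Combined with the reductions above, this would give $Y C_4 = I$, hence $Y = C_4^{-1}$.

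The part that will require care --- bookkeeping rather than a genuine obstacle --- is the interior coordinates: for a fixed $m$ I must pin down exactly which of the columns $Y e^{2i}$ (resp.\ $Y e^{2i+1}$) actually reach coordinate $m$, keep the sign $(-1)^i$ versus $(-1)^{i+1}$ from the interior formulas straight against the parity of $m$, and spot when $Y e^m$ is itself among the summands so that its diagonal $1$ must be included. Once the summation limits are correct each coordinate collapses to one of the two displayed Fibonacci identities (with $F_j = F_{j-1}+F_{j-2}$ needed in a few places) --- precisely the ``long and tedious'' computation the authors defer to the appendix. I expect the proofs of Lemmas~\ref{c1}--\ref{c3} to run along identical lines, differing only in how their one or two boundary columns are absorbed.
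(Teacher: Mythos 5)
Your proposal is correct and follows essentially the same route as the paper: the paper proves the model Lemma~\ref{c1} in the Appendix by the same direct column-by-column expansion, collapsing each coordinate via the two identities $1+\sum_{i=1}^{m}F_{2i}=F_{2m+1}$ and $\sum_{i=1}^{m}F_{2i-1}=F_{2m}$, and states that Lemma~\ref{c4} is proved identically. Your only streamlinings --- checking $YC_4=I$ alone (legitimate, since a one-sided inverse of a square matrix is the inverse) where the paper verifies both $C_1X=I$ and $XC_1=I$, and absorbing the boundary column $n-1$ into column $n-2$ by linearity rather than recomputing it --- are sound and your stated coefficient formulas check out.
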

 
\begin{example}\label{c4ex}
Let $n=7$.\\
$$C_4=\left( \begin{array}{cccccc}
1 & 0 & 1 & 0 & 1 & 1\\
0 & 1 & 1 & 0 & 1 & 1\\
0 & 0 & 1 & 1 & 0 & 0\\
0 & 0 & 0 & 1 & 1 & 1\\
0 & 0 & 0 & 0 & 1 & 0\\
0 & 0 & 0 & 0 & 0 & 1
\end{array} \right),$$ whereas, 
$$C_4^{-1}=\left( \begin{array}{cccccc}
1 & 0 & -F_1 & F_2 & -F_3 & -F_3\\
0 & 1 & -F_1 & F_2 & -F_3 & -F_3\\
0 & 0 & 1 & -F_1 & F_2 & F_2\\
0 & 0 & 0 & 1 & -F_1 & -F_1\\
0 & 0 & 0 & 0 & 1 & 0\\
0 & 0 & 0 & 0 & 0 & 1
\end{array} \right).$$
\end{example}

We need the column sums of the four matrices $C_1^{-1}, C_2^{-1}, C_3^{-1}$ and $C_4^{-1}$. These are collected in the next two results. In the first result to follow, we give the required numbers for the first two matrices. 

\begin{lemma}\label{colsum1}
For the matrix $C_1$, we have: 
\begin{eqnarray}
e^TC_1^{-1}e^1=1,\quad e^TC_1^{-1}e^2=1.\label{colsum1e1}
\end{eqnarray}
For odd $k,~3\leq k \leq n-3$,
\begin{eqnarray}
e^TC_1^{-1}e^k&=&-F_{k-1},\label{colsum1odd}
\end{eqnarray}
for even $k,~4\leq k \leq n-2$,
\begin{eqnarray}
e^TC_1^{-1}e^k&=&F_{k-1},\label{colsum1even}
\end{eqnarray}
and
\begin{eqnarray}
 e^TC_1^{-1}e^{n-1}&=&F_{n-3}.\label{colsum1(n-1)}
\end{eqnarray}
For $C_2$, we have: 
\begin{eqnarray}
e^TC_2^{-1}e^1=1,\quad e^TC_2^{-1}e^2=1.
\end{eqnarray}
For odd $k,~3\leq k \leq n-4$,
\begin{eqnarray}
e^TC_2^{-1}e^k&=&-F_{k-1}
\end{eqnarray}
and for even $k,~4\leq k \leq n-3$
\begin{eqnarray}
e^TC_2^{-1}e^k&=&F_{k-1}.
\end{eqnarray}
Further
\begin{eqnarray}
e^TC_2^{-1}e^{n-2}&=&F_{n-4} 
\end{eqnarray}
and 
\begin{eqnarray}
 e^TC_2^{-1}e^{n-1}=F_{n-4}.
\end{eqnarray}
\end{lemma}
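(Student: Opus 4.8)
Since Lemma~\ref{c1} already gives every entry of $C_1^{-1}$ explicitly (and likewise Lemma~\ref{c2} for $C_2^{-1}$), each quantity $e^{T}C_1^{-1}e^{k}$ is nothing but the sum of the known entries of the $k$-th column, so the whole lemma reduces to evaluating a handful of alternating Fibonacci sums. The plan is: (i) establish once and for all the auxiliary identity
\[
\sum_{j=1}^{m}(-1)^{j+1}F_{j}=1+(-1)^{m+1}F_{m-1},
\]
and then (ii) substitute the column formulas of Lemma~\ref{c1} (resp.\ Lemma~\ref{c2}) into the column sum and invoke (i).

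For step (i) I would split the sum by the parity of the index: if $m=2p$ it equals $\sum_{\ell=1}^{p}F_{2\ell-1}-\sum_{\ell=1}^{p}F_{2\ell}$, and if $m=2p+1$ it equals $\sum_{\ell=1}^{p+1}F_{2\ell-1}-\sum_{\ell=1}^{p}F_{2\ell}$. In either case the two basic formulae $\sum_{i=1}^{n}F_{2i-1}=F_{2n}$ and $1+\sum_{i=1}^{n}F_{2i}=F_{2n+1}$ collapse the right-hand side, and a direct comparison with $1+(-1)^{m+1}F_{m-1}$ (using $F_{2p+2}-F_{2p+1}=F_{2p}$) finishes it; alternatively this is a classical identity provable by a one-line induction.

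For step (ii), the columns $e^{1},e^{2}$ contribute $1$ trivially. For odd $k$ with $3\le k\le n-3$, summing the entries of $C_1^{-1}e^{k}$ gives $1-F_{k-2}+\sum_{i=2}^{k-1}(-1)^{i+1}F_{k-i}$; putting $j=k-i$ and using that $k$ is odd turns the last sum into $\sum_{j=1}^{k-2}(-1)^{j}F_{j}=-\bigl(1+F_{k-3}\bigr)$ by (i), so the total collapses to $-(F_{k-2}+F_{k-3})=-F_{k-1}$ via the Fibonacci recurrence. The even case and the bottom column $e^{n-1}$ are handled the same way: the reindexing $j=k-i$ (resp.\ $j=n-2-i$) again produces an alternating sum $\sum_{j}(-1)^{j}F_{j}$ up to an overall sign dictated by the parity of $k$ and of $n$, and the answer then reduces to $F_{k-1}$ (resp.\ $F_{n-3}$). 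The statement for $C_2$ is obtained in exactly the same fashion from Lemma~\ref{c2}; the only structural difference is that its two bottom columns $e^{n-2}$ and $e^{n-1}$ carry identical entries, so they yield the same value $F_{n-4}$.

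The computations are long but mechanical; the one place that needs care is the sign bookkeeping --- tracking how $(-1)^{i+1}$ transforms under the substitution $j=k-i$ for $k$ odd versus $k$ even, and making sure the Fibonacci subscripts in the ``short'' bottom columns (where $F_{k-i}$ is replaced by an $F_{n-2-i}$- or $F_{n-3-i}$-type expression) are shifted consistently. That parity-and-boundary accounting is where essentially all the risk of error lies; once identity (i) is available, the rest is substitution and repeated use of $F_{k-1}=F_{k-2}+F_{k-3}$.
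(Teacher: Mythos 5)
Your proposal is correct and follows essentially the same route as the paper: substitute the explicit column formulas from Lemmas \ref{c1} and \ref{c2} and evaluate the resulting alternating Fibonacci sums. The only (cosmetic) difference is that you package the alternating sum into the single closed-form identity $\sum_{j=1}^{m}(-1)^{j+1}F_j=1+(-1)^{m+1}F_{m-1}$ and finish with the recurrence $F_{k-1}=F_{k-2}+F_{k-3}$, whereas the paper telescopes adjacent pairs $F_{k-2i}-F_{k-(2i+1)}=F_{k-(2i+2)}$ and then applies the two basic summation formulae stated in Section~2; the two computations are equivalent.
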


Next, we turn our attention to the next two matrices.

\begin{lemma}
For the matrix $C_3$, we have:
\begin{eqnarray}
e^TC_3^{-1}e^1=1,\quad e^TC_3^{-1}e^2=1.
\end{eqnarray}
For odd $k,~3 \leq k \leq {n-3}$,
\begin{eqnarray}
e^TC_3^{-1}e^k&=&-F_{k-1}
\end{eqnarray}
and for even $k,~4 \leq k \leq {n-4}$,
\begin{eqnarray}
e^TC_3^{-1}e^k&=&F_{k-1}.
\end{eqnarray}
Further
\begin{eqnarray}
e^TC_3^{-1}e^{n-2}&=&-F_{n-4}
\end{eqnarray}
and 
\begin{eqnarray}
 e^TC_3^{-1}e^{n-1}=-F_{n-4}.
\end{eqnarray}
For $C_4$, we have: 
\begin{eqnarray}
e^TC_4^{-1}e^1=1,\quad e^TC_4^{-1}e^2=1.
\end{eqnarray}
For odd $k,~3 \leq k \leq {n-2}$,
\begin{eqnarray}
e^TC_4^{-1}e^k&=&-F_{k-1},
\end{eqnarray}
for even $k,~4 \leq k \leq {n-3}$
\begin{eqnarray}
e^TC_4^{-1}e^k&=&F_{k-1}
\end{eqnarray}
and 
\begin{eqnarray}
e^TC_4^{-1}e^{n-1}&=&-F_{n-3}.
\end{eqnarray}
\end{lemma}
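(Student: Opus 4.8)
The plan is to compute each column sum directly from the closed forms for $C_3^{-1}$ and $C_4^{-1}$ furnished by Lemmas \ref{c3} and \ref{c4}; no new structural input is needed, only one alternating-sum identity for Fibonacci numbers together with the two basic formulae recorded at the start of this section.

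I would first isolate the identity
$$\sum_{j=1}^{m}(-1)^{j}F_{j} \;=\; (-1)^{m}F_{m-1}-1,\qquad m\ge 1,$$
with the convention $F_0=0$. This is immediate by induction on $m$; alternatively one splits the left-hand side according to the parity of $j$ into $\sum_{i}F_{2i}$ over the even indices and $-\sum_{i}F_{2i-1}$ over the odd ones and applies $1+\sum_{i=1}^{n}F_{2i}=F_{2n+1}$ and $\sum_{i=1}^{n}F_{2i-1}=F_{2n}$, the two cases $m$ even and $m$ odd combining into the single formula above.

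For $C_3$ (with $n$ even): the columns $e^1,e^2$ contribute $1$ by \eqref{Y1e1}. For odd $k$ with $3\le k\le n-3$, formula \eqref{Y1odd} shows the $k$th column sum equals $-F_{k-2}+\sum_{i=2}^{k-1}(-1)^{i+1}F_{k-i}+1$; the substitution $j=k-i$ turns the middle sum into $(-1)^{k+1}\sum_{j=1}^{k-2}(-1)^{j}F_{j}$, and since $k$ is odd the identity above collapses the total to $-F_{k-2}-F_{k-3}=-F_{k-1}$. The even columns via \eqref{Y1even} have the same structure and give $F_{k-2}+F_{k-3}=F_{k-1}$. The two exceptional columns $e^{n-2}$ and $e^{n-1}$, read off from \eqref{Y1(n-2)} and \eqref{Y1(n-1)}, lead after the same substitution to $-F_{n-5}+(-1)^{n}\sum_{j=1}^{n-5}(-1)^{j}F_{j}+1$, which (using that $n$ is even) reduces to $-F_{n-5}-F_{n-6}=-F_{n-4}$. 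The argument for $C_4$ (with $n$ odd) is word for word the same for the generic odd and even columns, reproducing $-F_{k-1}$ and $F_{k-1}$; the single exceptional column $e^{n-1}$, taken from the last displayed equation of Lemma \ref{c4}, becomes $-F_{n-4}+(-1)^{n-1}\sum_{j=1}^{n-4}(-1)^{j}F_{j}+1$, which reduces to $-F_{n-4}-F_{n-5}=-F_{n-3}$.

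The computations are routine; the only thing that genuinely requires care is the sign that emerges from the reindexing $j=k-i$, namely the factor $(-1)^{k+1}$ (respectively $(-1)^{n}$ or $(-1)^{n-1}$ for the exceptional columns): an off-by-one in that parity analysis flips the sign of the whole answer. Accordingly I would settle the parity discussion once, inside the auxiliary identity, and then merely quote it, so that each of the many cases reduces to a two-term sum $\pm(F_{k-2}+F_{k-3})$ and an application of the Fibonacci recurrence $F_{k-1}=F_{k-2}+F_{k-3}$.
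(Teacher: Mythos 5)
Your proposal is correct and follows essentially the same route as the paper: the paper also obtains these column sums by summing the entries of the closed-form columns of $C_3^{-1}$ and $C_4^{-1}$ and reducing via the two basic Fibonacci summation formulae (its detailed computation is carried out for $C_1^{-1}$, with the $C_3$, $C_4$ cases declared analogous). The only difference is organizational: the paper telescopes adjacent terms pairwise through the recurrence $F_{k-2i}-F_{k-(2i+1)}=F_{k-(2i+2)}$ in each case, whereas you package the same cancellation once into the identity $\sum_{j=1}^{m}(-1)^{j}F_{j}=(-1)^{m}F_{m-1}-1$ and quote it, which is a harmless and arguably cleaner variant.
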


\begin{remarks}
In Example \ref{c1ex}, for the second matrix ($n=8$) one may observe that $e^TC_1^{-1}e^1=1=e^TC_1^{-1}e^2, e^TC_1^{-1}e^3=-1=-F_2, e^TC_1^{-1}e^4=2=F_3, e^TC_1^{-1}e^5=-3=-F_4, e^TC_1^{-1}e^6=5=F_5$ and $e^TC_1^{-1}e^7=5=F_5$. 
\end{remarks}

In the next result, we provide formulae for the sum of all the entries of the inverses of the four matrices considered above.

\begin{lemma}\label{sumofall}
$$S(C_1^{-1})=2+F_{n-2}= S(C_2^{-1})$$
and 
$$S(C_3^{-1})=2-F_{n-2}=S(C_4^{-1}).$$
\end{lemma}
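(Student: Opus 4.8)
The plan is to notice that $S(C_i^{-1})$ is nothing but the sum of the $n-1$ column sums $e^{T}C_i^{-1}e^{k}$, $1\le k\le n-1$, every one of which has already been evaluated in Lemma~\ref{colsum1} (for $C_1$ and $C_2$) and the lemma immediately following it (for $C_3$ and $C_4$). Consequently the statement reduces, for each of the four matrices, to computing a short alternating sum of Fibonacci numbers, and this I would carry out using the two telescoping identities $1+\sum_{i=1}^{m}F_{2i}=F_{2m+1}$ and $\sum_{i=1}^{m}F_{2i-1}=F_{2m}$ recorded at the beginning of this section.

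First I would handle $C_1$, so $n$ is even. Writing $S(C_1^{-1})=\sum_{k=1}^{n-1}e^{T}C_1^{-1}e^{k}$ and substituting the values from Lemma~\ref{colsum1}: the two leading columns contribute $2$; the odd columns $k=3,5,\dots,n-3$ contribute $-\sum_{j=1}^{(n-4)/2}F_{2j}$, since there $k-1$ runs over the even indices $2,4,\dots,n-4$; the even columns $k=4,6,\dots,n-2$ contribute $+\sum_{j=2}^{(n-2)/2}F_{2j-1}$, since there $k-1$ runs over the odd indices $3,5,\dots,n-3$; and the final column $k=n-1$ contributes $F_{n-3}$. By the first identity the even-indexed Fibonacci sum equals $F_{n-3}-1$, and by the second the odd-indexed sum equals $F_{n-2}-1$. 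Adding everything, the two occurrences of $F_{n-3}$ cancel and one is left with $2+F_{n-2}$.

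The remaining three cases go through in exactly the same fashion; the only change is the index and sign of the one or two ``tail'' columns. For $C_2$ ($n$ odd) the tail columns $n-2$ and $n-1$ each give $+F_{n-4}$, and after the same telescoping one obtains $2+F_{n-4}+F_{n-3}=2+F_{n-2}$. For $C_3$ ($n$ even) the tail columns $n-2$ and $n-1$ each give $-F_{n-4}$, so the surviving term changes sign and one obtains $2-F_{n-3}-F_{n-4}=2-F_{n-2}$. For $C_4$ ($n$ odd) the single tail column $n-1$ gives $-F_{n-3}$, yielding $2-F_{n-2}$ once more. In every case the bulk contributions from the odd and even columns are resolved by the same pair of identities.

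The only genuine work is the index bookkeeping: one must keep track of the parity of $n$, rewrite each block of columns as a sum of the form $\sum F_{2i}$ or $\sum F_{2i-1}$ with the correct endpoints, and peel off the appropriate boundary term so that the telescoping identities apply verbatim. It is also prudent to check the smallest cases $n=6$ and $n=7$ directly — they are already confirmed by Examples~\ref{c1ex}--\ref{c4ex} — since there the ``bulk'' ranges degenerate to a single term, which the same formulae nonetheless cover.
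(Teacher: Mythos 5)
Your proposal is correct and follows essentially the same route as the paper: both sum the column sums from Lemmas \ref{colsum1} and 2.6 and reduce the result via the two Fibonacci summation identities stated at the start of Section 2 (the paper merely telescopes $\sum(F_{2i+1}-F_{2i})=\sum F_{2i-1}$ before applying one identity, where you evaluate the two blocks separately). All four of your intermediate totals agree with the paper's.
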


In what follows, we calculate the column sums of the matrices $C_1^{-2}, C_2^{-2}, C_3^{-2}$ and $C_4^{-2}$.

The $k$th column sum of $C_1^{-2}$
\begin{eqnarray}
&=&e^TC_1^{-2}e^k \nonumber \\
&=&e^TC_1^{-1}I_{n-1}C_1^{-1}e^k \nonumber \\
&=&e^TC_1^{-1}(\sum_{i=1}^{n-1}e^i(e^i)^T)C_1^{-1}e^k\nonumber \\
&=&\sum_{i=1}^{n-1}(e^TC_1^{-1}e^i)((e^i)^TC_1^{-1}e^k).
\end{eqnarray}
Now, $e^TC_1^{-1}e^i$ is the $i$ th column sum of $C_1^{-1}$ and $(e^i)^TC_1^{-1}e^k$ is nothing but the $(i,k)$-th entry of $C_1^{-1}$. We already have the formula of $C_1^{-1}$ and we have calculated the column sums of $C_1^{-1}$ also. The column sums of $C_1^{-2}$ are determined as follows.\\
Since it is clear that $C_1^{-1}$ is an upper triangular matrix, it follows that,
\begin{equation}
(e^i)^TC_1^{-1}e^k=0\quad \textrm{for}\quad i>k.\nonumber
\end{equation}
 Further,
\begin{eqnarray}
(e^1)^TC_1^{-1}e^2=0\quad \textrm{and}\quad(e^{n-2})^TC_1^{-1}e^{n-1}=0,
\end{eqnarray}
by (\ref{X1e1}) and (\ref{X1(n-1)}). So,
\begin{eqnarray}
e^TC_1^{-2}e^1&=&\sum_{i=1}^{n-1}(e^TC_1^{-1}e^i)((e^i)^TC_1^{-1}e^1)\nonumber\\
&=&(e^TC_1^{-1}e^1)((e^1)^TC_1^{-1}e^1)      \nonumber\\
&=&1\label{alpha1}, 
\end{eqnarray}
by (\ref{X1e1}) and (\ref{colsum1e1}).
\begin{eqnarray}
e^TC_1^{-2}e^2&=&\sum_{i=1}^{n-1}(e^TC_1^{-1}e^i)((e^i)^TC_1^{-1}e^2)\nonumber
\end{eqnarray}
\begin{eqnarray}
&=&(e^TC_1^{-1}e^2)((e^2)^TC_1^{-1}e^2)       \nonumber\\
&=&1,\label{alpha2}
\end{eqnarray}
by (\ref{X1e1}) and (\ref{colsum1e1}).\\
For odd $k$ ($3\leq k \leq n-3$),
\begin{eqnarray}
e^TC_1^{-2}e^k&=&\sum_{i=1}^{n-1}(e^TC_1^{-1}e^i)((e^i)^TC_1^{-1}e^k)\nonumber\\
&=&(e^TC_1^{-1}e^1)((e^1)^TC_1^{-1}e^k)+\sum_{i=1}^{\frac{k-1}{2}}(e^TC_1^{-1}e^{2i})((e^{2i})^TC_1^{-1}e^k)\nonumber\\
&+&\sum_{i=1}^{\frac{k-3}{2}}(e^TC_1^{-1}e^{2i+1})((e^{2i+1})^TC_1^{-1}e^k)+(e^TC_1^{-1}e^k)((e^k)^TC_1^{-1}e^k)\nonumber\\
&=&-F_{k-2}-\sum_{i=1}^{\frac{k-1}{2}}F_{2i-1}F_{k-2i}-\sum_{i=1}^{\frac{k-3}{2}}F_{2i}F_{k-(2i+1)}-F_{k-1}\label{alphaodd}, 
\end{eqnarray}
by (\ref{X1odd}), (\ref{colsum1e1}), (\ref{colsum1odd}) and (\ref{colsum1even}).\\
For even $k$ ($4\leq k \leq n-2$),
\begin{eqnarray}
e^TC_1^{-2}e^k&=&\sum_{i=1}^{n-1}(e^TC_1^{-1}e^i)((e^i)^TC_1^{-1}e^k)\nonumber\\
&=&(e^TC_1^{-1}e^1)((e^1)^TC_1^{-1}e^k)+\sum_{i=1}^{\frac{k-2}{2}}(e^TC_1^{-1}e^{2i})((e^{2i})^TC_1^{-1}e^k)\nonumber\\
&+&\sum_{i=1}^{\frac{k-2}{2}}(e^TC_1^{-1}e^{2i+1})((e^{2i+1})^TC_1^{-1}e^k)+(e^TC_1^{-1}e^k)((e^k)^TC_1^{-1}e^k)\nonumber\\
&=&F_{k-2}+\sum_{i=1}^{\frac{k-2}{2}}F_{2i-1}F_{k-2i}+\sum_{i=1}^{\frac{k-2}{2}}F_{2i}F_{k-(2i+1)}+F_{k-1}\label{alphaeven}, 
\end{eqnarray}
by (\ref{X1even}), (\ref{colsum1e1}), (\ref{colsum1odd}) and (\ref{colsum1even}).
\begin{eqnarray}
e^TC_1^{-2}e^{n-1}&=&\sum_{i=1}^{n-1}(e^TC_1^{-1}e^i)((e^i)^TC_1^{-1}e^{n-1})\nonumber
\end{eqnarray}
\begin{eqnarray}
&=&(e^TC_1^{-1}e^1)((e^1)^TC_1^{-1}e^{n-1})+\sum_{i=1}^{\frac{n-2}{2}}(e^TC_1^{-1}e^{2i})((e^{2i})^TC_1^{-1}e^{n-1})\nonumber\\
&+&\sum_{i=1}^{\frac{n-2}{2}}(e^TC_1^{-1}e^{2i+1})((e^{2i+1})^TC_1^{-1}e^{n-1})\nonumber\\
&=&F_{n-4}+\sum_{i=1}^{\frac{n-4}{2}}(F_{2i-1})(F_{n-2-2i})+\sum_{i=1}^{\frac{n-4}{2}}(-F_{2i})(-F_{n-2-(2i+1)})\nonumber\\
&+&(e^TC_1^{-1}e^{n-2})((e^{n-2})^TC_1^{-1}e^{n-1}) +(e^TC_1^{-1}e^{n-1})((e^{n-1})^TC_1^{-1}e^{n-1})\nonumber\\
&=&F_{n-4}+\sum_{i=1}^{\frac{n-4}{2}}F_{2i-1}F_{n-2-2i}+\sum_{i=1}^{\frac{n-4}{2}}F_{2i}F_{n-2-(2i+1)}+F_{n-3}\label{alpha(n-1)},
\end{eqnarray}
using (\ref{X1(n-1)}), (\ref{colsum1e1}), (\ref{colsum1odd}), (\ref{colsum1even}) and (\ref{colsum1(n-1)}).\\
Similarly, the $k$th column sum of $C_2^{-2}=\sum_{i=1}^{n-1}(e^TC_2^{-1}e^i)((e^i)^TC_2^{-1}e^k)$. Also, $C_2^{-1}$ is an upper triangular matrix. Further,
\begin{eqnarray}
(e^1)^TC_2^{-1}e^2&=&0,\\
(e^{n-3})^TC_2^{-1}e^{n-2}&=&0,\\
(e^{n-3})^TC_2^{-1}e^{n-1}&=&0,\\
\textrm{and}\quad(e^{n-2})^TC_2^{-1}e^{n-1}&=&0,
\end{eqnarray}
using the formulae of $C_2^{-1}$ from Lemma (\ref{c2}). Now, by similar computations, using Lemma 2.2, 2.5 and the four equations as above, one can show the following: 
\begin{eqnarray}
e^TC_2^{-2}e^1=1\quad \textrm{and}\quad e^TC_2^{-2}e^2=1.
\end{eqnarray}
For odd $k$ ($3\leq k \leq n-4$),
\begin{eqnarray}
e^TC_2^{-2}e^k&=&-F_{k-2}-\sum_{i=1}^{\frac{k-1}{2}}F_{2i-1}F_{k-2i}-\sum_{i=1}^{\frac{k-3}{2}}F_{2i}F_{k-(2i+1)}-F_{k-1}.
\end{eqnarray}
For even $k$ ($4\leq k \leq n-3$),
\begin{eqnarray}
e^TC_2^{-2}e^k&=&F_{k-2}+\sum_{i=1}^{\frac{k-2}{2}}F_{2i-1}F_{k-2i}+\sum_{i=1}^{\frac{k-2}{2}}F_{2i}F_{k-(2i+1)}+F_{k-1}.\\
e^TC_2^{-2}e^{n-2}&=&F_{n-5}+\sum_{i=1}^{\frac{n-5}{2}}F_{2i-1}F_{n-3-2i}+\sum_{i=1}^{\frac{n-5}{2}}F_{2i}F_{n-3-(2i+1)}+F_{n-4}.\\
e^TC_2^{-2}e^{n-1}&=&F_{n-5}+\sum_{i=1}^{\frac{n-5}{2}}F_{2i-1}F_{n-3-2i}+\sum_{i=1}^{\frac{n-5}{2}}F_{2i}F_{n-3-(2i+1)}+F_{n-4}.
\end{eqnarray}
A similar argument applies to $C_3$ and $C_4$, too. The formulae corresponding to the column sums of $C_3^{-2}$ and $C_4^{-2}$ are as follows:
\begin{eqnarray}
e^TC_3^{-2}e^1=1\quad \textrm{and}\quad e^TC_3^{-2}e^2=1.
\end{eqnarray}
When $k$ is odd, ($3\leq k \leq {n-3}$)
\begin{eqnarray}
e^TC_3^{-2}e^k&=& -F_{k-2}-\sum_{i=1}^{\frac{k-1}{2}}F_{2i-1}F_{k-2i}-\sum_{i=1}^{\frac{k-3}{2}}F_{2i}F_{k-(2i+1)}-F_{k-1}.
\end{eqnarray}
When $k$ is even, ($4\leq k \leq {n-4}$)
\begin{eqnarray}
e^TC_3^{-2}e^k&=& F_{k-2}+\sum_{i=1}^{\frac{k-2}{2}}F_{2i-1}F_{k-2i}+\sum_{i=1}^{\frac{k-2}{2}}F_{2i}F_{k-(2i+1)}+F_{k-1}.
\end{eqnarray}
\begin{eqnarray}
e^TC_3^{-2}e^{n-2}&=& -F_{n-5}-\sum_{i=1}^{\frac{n-4}{2}}F_{2i-1}F_{n-3-2i}-\sum_{i=1}^{\frac{n-6}{2}}F_{2i}F_{n-3-(2i+1)}\nonumber\\&-&F_{n-4}.\\
e^TC_3^{-2}e^{n-1}&=& -F_{n-5}-\sum_{i=1}^{\frac{n-4}{2}}F_{2i-1}F_{n-3-2i}-\sum_{i=1}^{\frac{n-6}{2}}F_{2i}F_{n-3-(2i+1)}\nonumber\\&-&F_{n-4}.
\end{eqnarray}
\begin{eqnarray}
e^TC_4^{-2}e^1=1\quad\textrm{and}\quad e^TC_4^{-2}e^2=1.
\end{eqnarray}
When $k$ is odd, ($3\leq k \leq {n-2}$)
\begin{eqnarray}
e^TC_4^{-2}e^k&=& -F_{k-2}-\sum_{i=1}^{\frac{k-1}{2}}F_{2i-1}F_{k-2i}-\sum_{i=1}^{\frac{k-3}{2}}F_{2i}F_{k-(2i+1)}-F_{k-1}.
\end{eqnarray}
When $k$ is even, ($4\leq k \leq {n-3}$)
\begin{eqnarray}
e^TC_4^{-2}e^k&=& F_{k-2}+\sum_{i=1}^{\frac{k-2}{2}}F_{2i-1}F_{k-2i}+\sum_{i=1}^{\frac{k-2}{2}}F_{2i}F_{k-(2i+1)}+F_{k-1}.\\
e^TC_4^{-2}e^{n-1}&=&-F_{n-4}-\sum_{i=1}^{\frac{n-3}{2}}F_{2i-1}F_{n-2-2i}-\sum_{i=1}^{\frac{n-5}{2}}F_{2i}F_{n-2-(2i+1)}\nonumber\\&-&F_{n-3}.
\end{eqnarray}

Observe that the absolute $k$th column sum of $C_1^{-2}$ and $C_2^{-2}$ are the same except the last two columns. Let us denote the absolute $k$th column sum of $C_1^{-2}$ to be $\alpha_k$. We can easily see that 
\begin{equation}
e^TC_1^{-2}e^{n-2}=e^TC_1^{-2}e^{n-1}\nonumber
\end{equation}
and
\begin{equation}
e^TC_2^{-2}e^{n-3}=e^TC_2^{-2}e^{n-2}
=e^TC_2^{-2}e^{n-1}.\nonumber
\end{equation}

Similarly, the absolute $k$th column sum of $C_3^{-2}$ and $C_4^{-2}$ are the same, except for the last two columns. Let us denote the absolute $k$th column sum of $C_3^{-2}$ to be $\beta_k$. Further, we have
\begin{equation}
e^TC_3^{-2}e^{n-3}=e^TC_3^{-2}e^{n-2}
=e^TC_3^{-2}e^{n-1}\nonumber
\end{equation}
and
\begin{equation}
e^TC_4^{-2}e^{n-2}=e^TC_4^{-2}e^{n-1}\nonumber.
\end{equation}

The following two important properties of the two sequences $\alpha_k$ and $\beta_k$, will prove to be useful.

\begin{lemma}\label{twoseqres}
The sequences $\alpha_k$ and $\beta_k$ satisfy the following inequalities:\\
$(i) ~\alpha_k \leq \alpha_{k+1} \leq 2\alpha_k$.\\
$(ii)~\beta_k \leq \beta_{k+1} \leq 2\beta_k$.
\end{lemma}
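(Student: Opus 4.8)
The plan is to work directly from the closed-form expressions for the column sums of $C_1^{-2}$ (resp.\ $C_3^{-2}$) derived above, and to reduce each inequality to an identity or inequality among Fibonacci numbers via the basic summation formulae $1+\sum_{i=1}^n F_{2i}=F_{2n+1}$ and $\sum_{i=1}^n F_{2i-1}=F_{2n}$, together with the convolution-type sums $\sum_{i} F_{2i-1}F_{k-2i}$ and $\sum_i F_{2i}F_{k-(2i+1)}$ that appear in $\alpha_k$. First I would record, for each parity of $k$, the simplified value of $\alpha_k$: since $\alpha_k$ is the \emph{absolute} $k$th column sum, the alternating signs in $e^TC_1^{-2}e^k$ all become $+$, so $\alpha_k = F_{k-2}+F_{k-1}+\sum_{i=1}^{\lfloor (k-1)/2\rfloor}F_{2i-1}F_{k-2i}+\sum_{i=1}^{\lfloor (k-3)/2\rfloor}F_{2i}F_{k-(2i+1)}$ for the generic range $3\le k\le n-2$, with the analogous (sign-flipped-to-positive) formula for $\beta_k$, and the known collapsing $\alpha_{n-2}=\alpha_{n-1}$, $\beta_{n-2}=\beta_{n-1}$ at the tail. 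The monotonicity $\alpha_k\le\alpha_{k+1}$ is then immediate term-by-term: each Fibonacci factor is nondecreasing and the sum for $k+1$ has at least as many (nonnegative) terms as the sum for $k$, while $F_{k-2}+F_{k-1}=F_k\le F_{k-1}+F_k=F_{k+1}$; the tail equalities are covered by the displayed relations $e^TC_1^{-2}e^{n-2}=e^TC_1^{-2}e^{n-1}$ etc.

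The substantive half is the upper bound $\alpha_{k+1}\le 2\alpha_k$, equivalently $\alpha_{k+1}-\alpha_k\le \alpha_k$. I would compute the difference $\alpha_{k+1}-\alpha_k$ explicitly. Passing from $k$ to $k+1$ changes parity, so one convolution sum gains a term and each retained term $F_{2i-1}F_{k-2i}$ is replaced by $F_{2i-1}F_{k+1-2i}$, i.e.\ increases by $F_{2i-1}F_{k-1-2i}$ (using $F_{m+1}-F_m=F_{m-1}$); similarly for the other sum, and the prefix changes by $F_{k+1}-F_k=F_{k-1}$. Collecting, $\alpha_{k+1}-\alpha_k$ is itself a sum of products of Fibonacci numbers of \emph{strictly smaller} total index than those comprising $\alpha_k$, plus one new boundary term. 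The goal is to match this, term by term, against $\alpha_k=\sum(\text{products})$ and show the former is dominated. I expect this to go through by a direct injection of the summands of $\alpha_{k+1}-\alpha_k$ into those of $\alpha_k$, using the monotonicity of $F$ and the elementary inequality $F_{a-1}F_{b}\le F_{a}F_{b}$ and $F_aF_b\le F_{a+c}F_b$; the new boundary term (of the form $F_{1}F_{k-1}$ or $F_{2}F_{k-2}$) is absorbed by the already-present term $F_{k-1}$ (or $F_{k-2}$) in $\alpha_k$ since $F_1=1$, $F_2=1$.

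The main obstacle I anticipate is bookkeeping at the two parities and, more seriously, at the boundary indices $k=n-3,n-2$, where the generic formula for $\alpha_k$ is replaced by the special tail formulas and where the collapse $\alpha_{n-2}=\alpha_{n-1}$ must be used to avoid a spurious violation of $\alpha_{k+1}\le 2\alpha_k$ (indeed at $k=n-1$ the inequality becomes $\alpha_{n-1}\le 2\alpha_{n-1}$, trivially true, but at $k=n-2$ one must check $\alpha_{n-1}=\alpha_{n-2}\le 2\alpha_{n-2}$, again trivial). For the interior indices the term-by-term domination is robust because every quantity in sight is a nonnegative integer combination of Fibonacci numbers and the index shifts only ever \emph{decrease} indices when moving from $\alpha_{k+1}-\alpha_k$ back into $\alpha_k$. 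I would carry out part (i) in full for the $C_1$ case, then remark that part (ii) for $\beta_k$ is identical verbatim after replacing $C_1^{-2}$ by $C_3^{-2}$, since the two absolute column-sum sequences satisfy the same recurrences (only the overall sign of the non-trivial entries differs, which is killed by taking absolute values), and the tail collapse $\beta_{n-3}=\beta_{n-2}=\beta_{n-1}$ plays the same role as before.
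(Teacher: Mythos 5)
Your handling of the lower bound $\alpha_k\le\alpha_{k+1}$ is sound and in fact more economical than the paper's, which computes $\alpha_{k+1}-\alpha_k$ in closed form and then simplifies to a visibly positive expression: once you observe that $\alpha_k$ is the all-positive version of the column sum, a term-by-term comparison does the job. (Minor bookkeeping: your unified floor-function formula is off for even $k$ --- the second convolution sum there runs to $\tfrac{k-2}{2}$, not $\lfloor\tfrac{k-3}{2}\rfloor$, so your expression drops the term $F_{k-2}F_1$.)

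The genuine gap is in the upper bound, at exactly the step you pass over. Write $\alpha_k=F_{k-2}+F_{k-1}+\Sigma_k$, where $\Sigma_k$ denotes the combined convolution. In passing to $k+1$, the prefix increases by $F_{k+1}-F_k=F_{k-1}$ \emph{and} the convolution gains the new boundary term $F_{k-1}F_1=F_{k-1}$ (this term is not small: $F_1F_{k-1}=F_{k-1}$). So the part of $\alpha_{k+1}-\alpha_k$ not accounted for by increments of retained convolution terms is $2F_{k-1}$, while the non-convolution budget in $\alpha_k$ is only $F_{k-2}+F_{k-1}=F_k$. Your injection therefore leaves a deficit of $2F_{k-1}-F_k=F_{k-3}$; the term $F_{k-1}$ of $\alpha_k$ cannot absorb the boundary term because it is already spent on the prefix increment. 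The deficit must instead be covered by the strict slack in the convolution comparison, namely $\Sigma_k-(\text{increments})=\sum_i F_{2i-1}F_{k-2-2i}+\sum_i F_{2i}F_{k-3-2i}\ \ge\ F_1F_{k-4}+F_2F_{k-5}=F_{k-3}$, an inequality your proposal neither states nor proves and which only makes sense once $k$ is large enough for those summands to exist; the small values of $k$ need separate verification. This is precisely where the paper's proof does its real work: it evaluates $\alpha_{k+1}-2\alpha_k$ explicitly, reorganizes the difference around the middle term $F_{k/2}^2$ (resp.\ $F_{\frac{k-1}{2}}F_{\frac{k+1}{2}}$), and runs a cascading reduction with separate stopping points at $k=3,4,5,6$. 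Be warned that the boundary really is delicate: by the paper's own column-sum formulas, $\alpha_3=F_1+F_1F_1+F_2=3$ while $\alpha_1=\alpha_2=1$, so $\alpha_3>2\alpha_2$ and the claimed inequality fails at $k=2$ (the paper's proof asserts $\alpha_3=2$ at this point, contradicting its own formulas); any correct argument must either restrict the range of $k$ or treat this case honestly.
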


\section{Main Result}
Here, we prove the main result of this article.

\begin{theorem}\label{mainresult}
	For every $n\geq 6$, let $C_1$, $C_2$, $C_3$ and $C_4$ be the matrices described earlier.
	Let $p_n=S(C_1^{-2}u^n)$, where
	\begin{eqnarray}
	u^n&=&e^1+\sum_{i=1}^{\frac{n-2}{2}}e^{2i}+e^{n-1}, \textrm {$n$ is even}. \nonumber 
	\end{eqnarray}
	Let $r_n=S(C_2^{-2}v^n)$, where
	\begin{eqnarray}
	v^n&=&e^1+\sum_{i=1}^{\frac{n-3}{2}}e^{2i}+e^{n-2}+e^{n-1}, \textrm {$n$ is odd}. \nonumber
	\end{eqnarray}
	Let $q_n=-S(C_3^{-2}w^n)$, where
	\begin{eqnarray}
	w^n&=&\sum_{i=1}^{\frac{n-4}{2}}e^{2i+1}+e^{n-2}+e^{n-1}, \textrm { $n$ is even.}\nonumber 
	\end{eqnarray}
	Let $s_n=-S(C_4^{-2}z^n)$, where
	\begin{eqnarray}
	z^n&=&\sum_{i=1}^{\frac{n-3}{2}}e^{2i+1}+e^{n-1}, \quad \quad \textrm {where $n$ is odd.} \nonumber
	\end{eqnarray}
	Let $s$ be an integer satisfying either:
	\begin{eqnarray}
	2-F_{n-2}-q_n \leq s \leq 2+F_{n-2}+p_n,  \nonumber
	\end{eqnarray}
	or
	\begin{eqnarray}
	2-F_{n-2}-s_n \leq s \leq 2+F_{n-2}+r_n.  \nonumber
	\end{eqnarray}
	Then there exists an upper triangular, $\{0, 1\}$, singular, group invertible matrix $A$ of order $n$ such that $S(A^\#)=s$.
\end{theorem}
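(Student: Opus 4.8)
The plan is to realize each admissible $s$ as $S(A^\#)$ for an $n\times n$ matrix of the bordered block form
$$A=\begin{pmatrix} B & b\\ 0 & 0\end{pmatrix},$$
where $B$ will be one of $C_1,C_2,C_3,C_4$ (each of order $n-1$), the vector $b\in\{0,1\}^{n-1}$ will be chosen suitably, and the last row of $A$ is zero. Since each $C_i$ is upper triangular with entries in $\{0,1\}$ and $b$ occupies the last column, strictly above the diagonal, $A$ is upper triangular with entries in $\{0,1\}$. As $B$ is invertible, $rank(A)=n-1<n$, so $A$ is singular, while $A^2=\begin{pmatrix} B^2 & Bb\\ 0 & 0\end{pmatrix}$ again has rank $n-1$; hence $rank(A)=rank(A^2)$ and $A^\#$ exists. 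A direct check of $AXA=A$, $XAX=X$, $AX=XA$ will give
$$A^\#=\begin{pmatrix} B^{-1} & B^{-2}b\\ 0 & 0\end{pmatrix},$$
so that, writing $e$ for the all-ones vector of length $n-1$ and $b=\sum_{k\in K}e^k$,
$$S(A^\#)=S(B^{-1})+e^{T}B^{-2}b=S(B^{-1})+\sum_{k\in K}\bigl(e^{T}B^{-2}e^k\bigr).$$
Thus $S(A^\#)$ is $S(B^{-1})$ plus an arbitrary subset sum of the column sums of $B^{-2}$, and all these column sums were computed in Section 2.

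I would treat $n$ even using $B=C_1$ and $B=C_3$ (the odd case being identical with $C_2,C_4$ and $v^n,z^n,r_n,s_n$ in place of $C_1,C_3$ and $u^n,w^n,p_n,q_n$). By Lemma \ref{sumofall}, $S(C_1^{-1})=2+F_{n-2}$ and $S(C_3^{-1})=2-F_{n-2}$. The $k$th column sum of $C_1^{-2}$ is $+1$ for $k=1,2$, equals $+\alpha_k$ for even $k$ and for $k=n-1$, and equals $-\alpha_k$ for odd $k$ with $3\le k\le n-3$, where $\alpha_k=|e^{T}C_1^{-2}e^k|$ and $\alpha_{n-2}=\alpha_{n-1}$; the column sums of $C_3^{-2}$ coincide with these except that columns $n-2$ and $n-1$ each contribute $-\beta_{n-2}=-\beta_{n-1}$, while $\beta_k=\alpha_k$ for $k\le n-3$. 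Since $u^n$ marks precisely the columns of $C_1^{-2}$ with positive column sum and $w^n$ marks precisely those of $C_3^{-2}$ with negative column sum, $p_n$ is the maximum and $-q_n$ the minimum of the corresponding subset sums; write $N_1=\alpha_3+\alpha_5+\cdots+\alpha_{n-3}$, so $-N_1$ is the minimum for $C_1^{-2}$, and let $P_3$ be the maximum for $C_3^{-2}$. The heart of the proof is the claim that the set of subset sums of the column sums of $C_1^{-2}$ is exactly $[-N_1,p_n]\cap\mathbb{Z}$, and that of $C_3^{-2}$ is exactly $[-q_n,P_3]\cap\mathbb{Z}$.

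To prove that no gaps occur, I would process the columns in the natural order $1,2,\dots,n-1$: with $c_j=e^{T}C_1^{-2}e^j$ and $T_j$ the set of subset sums using only the first $j$ columns ($T_0=\{0\}$, $T_j=T_{j-1}\cup(T_{j-1}+c_j)$), an induction shows each $T_j$ is a block of consecutive integers as long as $|c_j|\le 1+\max T_{j-1}-\min T_{j-1}$ at every step. For $j=1,2$ this is clear; for $j\ge3$ it is the inequality $\alpha_j\le3+\sum_{i=3}^{j-1}\alpha_i$, i.e. $\sum_{i=3}^{j-1}\alpha_i\ge\alpha_j-3$, which follows by induction from $\alpha_3=3$ (the case $j=3$, with equality) and Lemma \ref{twoseqres}(i): indeed $\sum_{i=3}^{j}\alpha_i\ge2\alpha_j-3\ge\alpha_{j+1}-3$. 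The same argument with Lemma \ref{twoseqres}(ii) handles $C_3^{-2}$. Note that the natural ordering automatically interleaves the positive (even) and negative (odd) columns; this is exactly what makes the induction run, and it is essential — using only the positive columns of $C_1^{-2}$ one cannot produce the values $3$ or $4$, so $2+F_{n-2}+3$ and $2+F_{n-2}+4$ would be unattainable that way. Consequently $B=C_1$ realizes every integer in $[\,2+F_{n-2}-N_1,\;2+F_{n-2}+p_n\,]$ and $B=C_3$ every integer in $[\,2-F_{n-2}-q_n,\;2-F_{n-2}+P_3\,]$.

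It then remains to see that these two intervals together cover $[\,2-F_{n-2}-q_n,\;2+F_{n-2}+p_n\,]$, i.e. that $2-F_{n-2}+P_3\ge2+F_{n-2}-N_1-1$; since $\beta_k=\alpha_k$ for $k\le n-3$ one computes $P_3+N_1=2+\sum_{k=3}^{n-3}\alpha_k$, so this reduces to $\sum_{k=3}^{n-3}\alpha_k\ge2F_{n-2}-3$, a Fibonacci identity (one checks $\alpha_k=\alpha_{k-1}+\alpha_{k-2}+F_{k-2}$, whence the bound is routine, with equality only when $n=6$). The odd case is word-for-word the same. I expect the no-gap claim of the previous paragraph to be the main obstacle: the column sums of $B^{-2}$ nearly double from one to the next, so their subset sums could a priori be very sparse, and it is only the sharp bound $\alpha_{k+1}\le2\alpha_k$ of Lemma \ref{twoseqres}, the exact value $\alpha_3=3$, and the interleaved column order together that force the subset sums to fill a whole integer interval. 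The interval-overlap inequality is secondary and purely computational, but it too must be verified down to the boundary case $n=6$, where the two intervals only just meet.
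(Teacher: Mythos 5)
Your proposal is correct, and its skeleton coincides with the paper's: the same bordered matrices built from $C_1,\dots,C_4$, the same block formula for $A^{\#}$ with corner block $B^{-2}b$, and the same reduction to subset sums of the column sums of $B^{-2}$ governed by Lemma \ref{twoseqres}. You differ in two genuine ways. (i) The paper partitions the target range into three pieces and handles the middle piece $2-F_{n-2}\le s\le 2+F_{n-2}$ by quoting the Farber--Berman theorem for invertible $(n-1)\times(n-1)$ matrices, bordered by a zero column; consequently it only has to realize the values $1,\dots,p_n$ (respectively $1,\dots,q_n$) as sums of distinct terms taken from the positive columns of $C_1^{-2}$ (respectively the negative columns of $C_3^{-2}$). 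You instead prove that the subset sums of \emph{all} the columns form the full integer intervals $[-N_1,p_n]$ and $[-q_n,P_3]$ and glue the two shifted copies. This eliminates the appeal to Farber--Berman, but it obliges you to prove the overlap inequality $\sum_{k=3}^{n-3}\alpha_k\ge 2F_{n-2}-3$, which the paper never needs; the inequality is true (your recurrence $\alpha_k=\alpha_{k-1}+\alpha_{k-2}+F_{k-2}$ does follow from the explicit column-sum formulae and yields $\alpha_k\ge F_{k+1}$, hence $\sum_{k=3}^{n-3}\alpha_k\ge F_n-5\ge 2F_{n-2}-3$, with equality exactly at $n=6$), but it is an additional lemma that must be written out in full. (ii) For the no-gap claim the paper runs a greedy descent on the $\alpha_k$ over a chain of sub-intervals of $(0,p_n]$, using both halves of $\alpha_k\le\alpha_{k+1}\le 2\alpha_k$; your prefix induction $T_j=T_{j-1}\cup(T_{j-1}+c_j)$, anchored at $\alpha_1=\alpha_2=1$, $\alpha_3=3$ and propagated by $\alpha_{j+1}\le 2\alpha_j$ alone, is cleaner, needs only half of Lemma \ref{twoseqres}, and makes transparent why interleaving the positive and negative columns is what fills the interval. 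In short, your route is self-contained modulo one extra Fibonacci estimate, while the paper's is shorter because it outsources the central third of the interval to the known invertible case.
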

\begin{proof}
Observe that all the even column sums, as well as the first and the last column sums of $C_1^{-2}$ are positive and so $p_n$ is positive. All the even column sums, the first and the last two column sums of $C_2^{-2}$ also are positive. So, $r_n$ is positive. Again for $C_3^{-2}$ all the odd column sums and the last two column sums are negative and so $q_n$ is positive. For a similar reason $s_n$ is positive. 

First, let $n$ to be even. Divide the interval [$2-F_{n-2}-q_n, 2+F_{n-2}+p_n$] into three disjoint subintervals as follows:
\begin{eqnarray}
2-F_{n-2}\leq s\leq 2+F_{n-2}, \nonumber \\
2+F_{n-2}< s \leq 2+F_{n-2}+p_n \nonumber \\
\textrm{and}\quad 2-F_{n-2}-q_n\leq s < 2-F_{n-2} \nonumber
\end{eqnarray}
{\bf Case (1)}: $2-F_{n-2}\leq s\leq 2+F_{n-2}$.\\
There exists an invertible, $\lbrace 0,1 \rbrace$, upper triangular matrix $C$ of order $(n-1)$ such that $S(C^{-1})=s$. Let 
\[A=\left( \begin{array}{cc}
C & 0^T\\
0 & 0
\end{array} \right) \in\mathbb{R}^{n\times n}.\]
Then, $A$ is singular, $A^\#$ exists and  \[A^\#=\left( \begin{array}{cc}
C^{-1} & 0\\
0 & 0
\end{array} \right)\]
and so,
\begin{equation}
S(A^\#)=S(C^{-1})=s.\nonumber
\end{equation}
{\bf Case (2)}: $2+F_{n-2}< s \leq 2+F_{n-2}+p_n$.\\
 There exists $ m\in\mathbb{N}$ such that $s=2+F_{n-2}+m$ with $1\leq m \leq p_n$. Let
\[A=\left( \begin{array}{cc}
C_1 & (x^m)^T\\
0 & 0
\end{array} \right), \]
where $x^m \in \mathbb{R}^{n-1}$ is to be determined. Then one may easily verify that 
\[A^\#=\left( \begin{array}{cc}
C_1^{-1} & C_1^{-2}x^m\\
0 & 0
\end{array} \right) \]
and one has
\begin{equation}
S(A^\#) = S(C_1^{-1}) +S(C_1^{-2}x^m) \nonumber
\end{equation}
Already, $S(C_1^{-1}) = 2 + F_{n-2}$. We must determine $x^m \in \lbrace 0,1 \rbrace ^{n-1}$ such that $S(C_1^{-2}x^m) = m$. This equation is the same as,
\begin{equation}
 {\sum _{i=1}^{n-1}(e^TC_1^{-2}e^i)x_i} = m ,\label{main}
\end{equation}
with \[x^m=\left( \begin{array}{c}
x_1\\
x_2\\
x_3\\
\vdots\\
x_{n-1}
\end{array} \right), \]
where each $x_i\in \lbrace 0,1 \rbrace$, $1\leqslant i\leqslant n-1$.\\
 Since we have denoted the absolute $k$th column sum of $C_1^{-2}$ as $\alpha_k$, (\ref{main}) becomes,
\begin{equation}
\alpha_1x_1+\sum_{i=1}^{\frac{n-2}{2}}\alpha_{2i}x_{2i}+\sum_{i=1}^{\frac{n-4}{2}}(-\alpha_{2i+1})x_{2i+1}+\alpha_{n-2}x_{n-1}=m.\label{mainc1}
\end{equation}
For the sign distributions of $\alpha_k$ in the above expression, one may refer to (\ref{alpha1}) - 
(\ref{alpha(n-1)}).
We consider three steps now.\\
{\bf Step 1: } $m=p_n$. Since  $p_n=S(C_1^{-2}x^n)$,  where  $x^n=e^1+\sum_{i=1}^{\frac{n-2}{2}}e^{2i}+e^{n-1}$, one has 
$$p_n=\alpha_1+ \sum_{i=1}^{\frac{n-2}{2}}\alpha_{2i}+\alpha_{n-2}.$$ Clearly, equation (\ref{mainc1}) has a solution for $m=p_n$ from the set $ \lbrace 0,1 \rbrace $.\\
{\bf Step 2: } $m=p_n-\alpha_k$ for some $k=1,2,\cdots,{n-2}$. In this case, $$p_n=\alpha_1+ \sum_{i=1}^{\frac{n-2}{2}}\alpha_{2i}+\alpha_{n-2}.$$ Upon adding $(-\alpha_k)$ to both sides we get,\\
$$\alpha_1+ \sum_{i=1}^{\frac{n-2}{2}}\alpha_{2i}+\alpha_{n-2}-\alpha_k=p_n-\alpha_k.$$
If $k$ is even, then we can rewrite the above expression as, 
$$\alpha_1+ \sum_{i=1}^{\frac{k-2}{2}}\alpha_{2i}+\sum_{i={\frac{k+2}{2}}}^{\frac{n-2}{2}}\alpha_{2i}+ \alpha_{n-2}=p_n-\alpha_k .$$
Thus, $$ \alpha_1+ \sum_{i=1}^{\frac{k-2}{2}}\alpha_{2i}+0\cdot \alpha_k+\sum_{i={\frac{k+2}{2}}}^{\frac{n-2}{2}}\alpha_{2i}+ \sum_{i=1}^{\frac{n-4}{2}}0\cdot(-\alpha_{2i+1})+\alpha_{n-2}=p_n-\alpha_k,$$
showing that there is a solution to equation (\ref{mainc1}) from the set $\lbrace 0,1 \rbrace$ .\\
Now, if $k$ is odd, then we can rewrite the expression for $p_n-\alpha_k$ as,\\
$$\alpha_1+ \sum_{i=1}^{\frac{n-2}{2}}\alpha_{2i}+\sum_{i=1}^{\frac{k-3}{2}}0\cdot(-\alpha_{2i+1})+1\cdot(-\alpha_k)+\sum_{i={\frac{k+1}{2}}}^{\frac{n-4}{2}}0\cdot (-\alpha_{2i+1})+ \alpha_{n-2}= p_n-\alpha_k.$$
So again, a solution exists.\\
From the result of Step $2$, one may deduce that, for any $k_1,k_2,\cdots k_m$, all distinct and lying between $0$ and $n-1$, there exists a solution to equation (\ref{mainc1}) for $m=p_n-(\alpha_{k_1}+\alpha_{k_2}+\cdots \alpha_{k_m})$.\\
Since $\alpha_{n-2} $ comes twice in the expression of $p_n$, in particular for $p_n - 2\alpha_{n-2}$, equation (\ref{mainc1}) has a solution from the set $\lbrace 0,1 \rbrace $.\\
{\bf Step 3: } $m=p_n-l$, for some $l$ satisfying $p_n-\alpha_{n-2}<p_n-l<p_n$. Then $ 1\leq l < \alpha_{n-2}$ and there exists $k_1 \in \mathbb{N}$ such that $\alpha_{k_1} \leq l < \alpha_{k_1+1}$, with $k_1<{n-2}$. Now, $$p_n-l=(p_n-\alpha_{k_1})+(\alpha_{k_1}-l).$$
If $\alpha_{k_1}-l=0$, then the proof is done, by Step 2. So, let $l-\alpha_{k_1}>0$.  Then we have $ k_2 \in \mathbb{N}$ such that $\alpha_{k_2} \leq l-\alpha_{k_1} < \alpha_{k_2+1}$, with $k_2 \leq k_1$. If possible, let $k_2=k_1$ so that $2\alpha_{k_1} \leq l < \alpha_{k_1+1}$. This is a contradiction to $2\alpha_{k_1} \geq \alpha_{k_1+1}$, as shown earlier. Hence, $k_2 < k_1$.\\
Then, $$p_n-l=(p_n-\alpha_{k_1}-\alpha_{k_2})+(\alpha_{k_1}+\alpha_{k_2}-l).$$
If $\alpha_{k_1}+\alpha_{k_2}-l=0$, then the proof is done. Otherwise, there exists $ k_3 \in \mathbb{N }$ such that $$\alpha_{k_3} \leq l-(\alpha_{k_1}+\alpha_{k_2}) < \alpha_{k_3+1}.$$
Since $l-(\alpha_{k_1}+\alpha_{k_2})<l-\alpha_{k_1}$, we have $k_3 \leq k_2$. If $k_3=k_2$, then $$2\alpha_{k_2} \leq l-\alpha_{k_1} < \alpha_{k_2+1},$$ again a contradiction. Therefore, $k_3 < k_2$.
Proceeding in this manner, there exists some $i \in \mathbb{N }$ such that, $$p_n-l=p_n-(\alpha_{k_1}+\alpha_{k_2}+\cdots +\alpha_{k_i}),$$ where $\alpha_{k_1}+\alpha_{k_2}+\cdots \alpha_{k_i}-l=0$ and $k_1>k_2>\cdots > k_i$. The fact that 
$\alpha_1=\alpha_2=1$ confirms the existence of such an index $i$.

Consider the real line segment corresponding to the interval $(0,p_n]$. Since $$p_n=\alpha_1+\sum_{i=1}^{\frac{n-2}{2}}\alpha_{2i}+\alpha_{n-2},$$ this segment can be partitioned into $n$ line segments given  by $(0, \alpha_1],~(\alpha_1, \alpha_1+\alpha_2],~(\alpha_1+\alpha_2, \alpha_1+\alpha_2+\alpha_4],\cdots,~(\alpha_1+\sum_{i=1}^{\frac{n-4}{2}}\alpha_{2i}, \alpha_1+\sum_{i=1}^{\frac{n-2}{2}}\alpha_{2i}]$ and $(\alpha_1+\sum_{i=1}^{\frac{n-2}{2}}\alpha_{2i},p_n]$. These intervals have lengths $\alpha_1,~\alpha_2,~\alpha_4,~\alpha_6,\cdots,~\alpha_{n-4},~\alpha_{n-2}$ and $\alpha_{n-2}$, respectively. Note that last two intervals have the same length.

Now, we rewrite the above sub-intervals using the formula for $p_n$ in the reverse order as, $(p_n-\alpha_{n-2},p_n],~(p_n-2\alpha_{n-2}, p_n-\alpha_{n-2}],~(p_n-\alpha_{n-2}-\sum_{i=1}^2\alpha_{n-2i}, p_n-2\alpha_{n-2}],\cdots,~(p_n-\alpha_{n-2}-\sum_{i=1}^{\frac{n-2}{2}}\alpha_{n-2i}, p_n-\alpha_{n-2}-\sum_{i=1}^{\frac{n-4}{2}}\alpha_{n-2i}]$ and $(0,p_n-\alpha_{n-2}-\sum_{i=1}^{\frac{n-2}{2}}\alpha_{n-2i}]$.

We must show the existence of a solution to (\ref{mainc1}) from the set $\lbrace 0,1\rbrace$ for $m$ lying in each of these intervals. However, it has been already shown for $(p_n-\alpha_{n-2},p_n]$. For convenience, set $\gamma_n:=p_n-\alpha_{n-2}$. 
In a manner similar to the above, one may prove that, if $1\leq k \leq \frac{n-2}{2}$ and $l$ satisfies the inequalities\\
$$\gamma_n-\sum_{i=1}^{k}\alpha_{n-{2i}} < \gamma_n-\sum_{i=1}^{k-1}\alpha_{n-{2i}}-l < \gamma_n-\sum_{i=1}^{k-1}\alpha_{n-{2i}},$$
then there exists a solution to the equation (\ref{mainc1}) for $$m=p_n-\alpha_{n-2}-\sum_{i=1}^{k-1}\alpha_{n-2i}-l,$$ from the set $\lbrace 0,1 \rbrace $.

Thus, for all $m$ with $1 \leq m \leq p_n$ there exists a solution to (\ref{mainc1}) from the set $\lbrace 0,1 \rbrace $.

{\bf Case (3): }
$ 2-F_{n-2}-q_n \leq s < 2-F_{n-2} $\\
Let
\[B=\left( \begin{array}{cc}
C_3 & (y^m)^T\\
0 & 0
\end{array} \right), \]
so that
\[B^\#=\left( \begin{array}{cc}
C_3^{-1} & C_3^{-2}y^m\\
0 & 0
\end{array} \right), \] where $ y^m \in \lbrace 0,1 \rbrace ^{n-1}$. Then, $$S(B^\#)=S(C_3^{-1})+S(C_3^{-2}y^m)=2-F_{n-2}+S(C_3^{-2}y^m).$$ As $$2-F_{n-2}-q_n \leq s < 2-F_{n-2},$$ one has $s=2-F_{n-2}-m,$ for some $m$ lying between $1$ and $q_n$. So, the problem reduces to ensuring the existence of $y^m \in \lbrace 0,1 \rbrace ^{n-1}$ such that, $ S(C_3^{-2}y^m)=-m $. As before, we must determine if there exist $y_i\in \lbrace 0,1 \rbrace,~1\leq i\leq n-1$ such that
\begin{eqnarray}
{\sum _{i=1}^{n-1}(e^TC_3^{-2}e^i)y_i}& =& -m,\label{main2}
\end{eqnarray}
where \[y^m=\left( \begin{array}{c}
y_1\\
y_2\\
y_3\\
\vdots\\
y_{n-1}
\end{array} \right). \]
Since we have denoted the absolute $k$th column sum of $C_3^{-2}$ as $\beta_k$, the above equation becomes
\begin{equation}
\beta_1 x_1+\sum_{i=1}^{\frac{n-4}{2}}\beta_{2i} x_{2i} + \sum_{i=1}^{\frac{n-4}{2}}(-\beta_{2i+1}) x_{2i+1} +(-\beta_{n-3} )x_{n-2}+(-\beta_{n-3})x_{n-1}=-m.
\end{equation}
One may verify that the signs above come from the formulae for the column sums of $C_3^{-2}$. We may convert the above problem to an equivalent one as,
\begin{equation}
\beta_1 x_1+\sum_{i=1}^{\frac{n-4}{2}}\beta_{2i} x_{2i} + \sum_{i=1}^{\frac{n-4}{2}}(-\beta_{2i+1}) x_{2i+1} +(-\beta_{n-3} )x_{n-2}+(-\beta_{n-3})x_{n-1}=m\label{mainc3}
\end{equation}
and we have to show that, for any $m$ with $1\leq m \leq q_n,$ there exists a solution to the equation above, from the set $\lbrace 0,-1 \rbrace $.\\
{\bf Step 1:} $m=q_n$. Recall that, $q_n=-S(C_3^{-2}y^n)$, where $$y^n=\sum_{i=1}^{\frac{n-4}{2}}e^{2i+1}+e^{n-2}+e^{n-1}.$$ Therefore, for $m=-q_n$ there is a solution from the set $\lbrace 0,1 \rbrace$. So, for $m=q_n$ there is a solution to (\ref{mainc3}) from the set $\lbrace 0,-1 \rbrace$. \\
{\bf Step 2:} $m=q_n-\beta_k$ for some $k=1,2,\cdots {n-3}.$ We already have, $$\sum_{k=1}^{\frac{n-4}{2}}\beta_{2k+1}+\beta_{n-3}+\beta_{n-3}=q_n,$$ so that $q_n-\beta_1$ equals \\
$\beta_1(-1)+\sum_{i=1}^{\frac{n-4}{2}}\beta_{2i}\cdot 0+\sum_{i=1}^{\frac{n-4}{2}}(-\beta_{2i+1})(-1)+(-\beta_{n-3})(-1)+(-\beta_{n-3})(-1).$\\
This shows that, for $m=q_n-\beta_1,$ the equation (\ref{mainc3}) has a solution from the set $\lbrace 0,-1 \rbrace $.\\
Considering the formula for $q_n$ again, and adding $(-\beta_k)$ to both sides we get,
$$\sum_{i=1}^{\frac{n-4}{2}}\beta_{2i+1}+\beta_{n-3}+\beta_{n-3}+(-\beta_k)=q_n-\beta_k.$$
If $k$ is even, then $q_n-\beta_k$ equals \\
$0\cdot \beta_1+\sum_{i=1}^{\frac{k-2}{2}}\beta_{2i}\cdot0+
\beta_k(-1)+ \sum_{\frac{k+2}{2}}^{\frac{n-4}{2}}\beta_{2i}\cdot 0 \\
~~~~~~~~~~~~~~~~~~~~~~~~~~~~~~~~~+ \sum_{i=1}^{\frac{n-4}{2}}(-\beta_{2i+1})(-1)+(-\beta_{n-3})(-1)+(-\beta_{n-3})(-1).$\\
So, the requirement is satisfied.\\
If $k$ is odd then $q_n-\beta_k$ equals \\
$0\cdot \beta_1+\sum_{i=1}^{\frac{n-4}{2}}\beta_{2i}\cdot 0+\sum_{i=1}^{\frac{k-3}{2}}(-\beta_{2i+1})(-1)+0\cdot \beta_k \\
~~~~~~~~~~~~~~~~~~~~~~~~~+\sum_{\frac{k+1}{2}}^{\frac{n-4}{2}}(-\beta_{2i+1})(-1)+(-\beta_{n-3})(-1)+(-\beta_{n-3})(-1).$\\
Therefore, for each $k$ varying between $1$ and $n-3 $ there exists a solution for $m=q_n-\beta_k$ from the set $\lbrace 0,-1 \rbrace$. Further, it follows in an entirely similar manner, that for any $k_1,k_2,\cdots k_i$ all distinct and varying between $1$ and ${n-3}$, there exists a solution to the equation (\ref{mainc3}) for $$m=q_n-(\beta_{k_1}+\beta_{k_2}+\cdots +\beta_{k_i})$$ from the set $\lbrace 0,-1 \rbrace$. Since $\beta_{n-3} $ comes thrice in the expression of $q_n$, in particular for $q_n - 2\beta_{n-3}$ and $q_n-3\beta_{n-3}$, equation (\ref{mainc3}) has a solution from the set $\lbrace 0,1 \rbrace $.\\ 
{\bf Step 3:} $m=q_n-l$ for some $l$ satisfying $q_n-\beta_{n-3} < q_n-l < q_n $. Then $ 1\leq l< \beta_{n-3}$ and there exists $  k_1 \in \mathbb{N}$ such that $\beta_{k_1} \leq l < \beta_{k_1+1}$, with $k_1<{n-3}$.\\
Now, $$q_n-l=(q_n-\beta_{k_1})+(\beta_{k_1}-l).$$ If $\beta_{k_1}-l=0$, then the proof is done, by Step 2. So, let $l-\beta_{k_1}>0$. Then there exists $ k_2 \in \mathbb{N}$ such that $\beta_{k_2}\leq l-\beta_{k_1}< \beta_{k_2+1}$. Since $l-\beta_{k_1} < l$ we have $k_2\leq k_1$. If possible let $k_2=k_1$. Then $2\beta_{k_1}\leq l < \beta_{k_1+1}$. But already we have seen that $2\beta_{k_1} \geq \beta_{k_1+1}$. Hence, $k_2< k_1.$\\
Now, $$q_n-l=(q_n-\beta_{k_1}-\beta_{k_2})+(\beta_{k_1}+\beta_{k_2}-l).$$ If $\beta_{k_1}+\beta_{k_2}-l=0$ then the proof is done. Otherwise, $l >\beta_{k_1}+\beta_{k_2}$. There exists $k_3\in \mathbb{N}$ such that $\beta_{k_3}\leq l-(\beta{k_1}+\beta_{k_2})< \beta_{k_3+1}$. Since 
$$l-(\beta_{k_1}+\beta_{k_2})< l-\beta_{k_1}, \ \ we ~have \ \ k_3\leq k_2.$$ If possible, let $k_3=k_2$. Then $2\beta_{k_2} \leq l-\beta_{k_1} < \beta_{k_2+1}$. But, $\beta_{k_2+1}\leq 2\beta_{k_2}$, again a contradiction. So, $k_3<k_2$.\\
Now, $$q_n-l=(q_n-\beta_{k_1} -\beta_{k_2} -\beta_{k_3} )+(\beta_{k_1} +\beta_{k_2} +\beta_{k_3} -l).$$ If $\beta_{k_1} +\beta_{k_2} +\beta_{k_3} -l=0$ then there is nothing to prove. Otherwise, there exists $i\in \mathbb{N}$ such that, $$q_n-l=q_n-(\beta_{k_1}+\beta_{k_2}+\cdots +\beta_{k_i})$$ and $\beta_{k_1}+\beta_{k_2}+\cdots +\beta_{k_i}=0$ where, $k_1> k_2>\cdots>k_i$. The fact that $ \beta_1=\beta_2=1$ confirms the existence of $i$.\\
Similarly, we can prove that, for any natural number $l$ with $$q_n-2\beta_{n-3}< q_n-\beta_{n-3}-l< q_n-\beta_{n-3},$$ there exists a solution to the equation (\ref{mainc3}) for $m=q_n-\beta_{n-3}-l$ from the set $\lbrace 0,-1 \rbrace$. \\
In general, one can prove that if $1\leq k \leq \frac{n-4}{2}$ and a natural number $l$ satisfies the inequalities \\
$$\delta_n-\sum_{i=1}^k \beta_{n-(2i+1)} < \delta_n-\sum_{i=1}^{k-1}\beta_{n-(2i+1)}-l< \delta_n-\sum_{i=1}^{k-1}{\beta_{n-(2i+1)}},$$ where $\delta_n=q_n-2\beta_{n-3},$ there exists a solution to the equation $$m=q_n-2{\beta_{n-3}}-\sum_{i=1}^{k-1}{\beta_{n-(2i+1)}}-l$$ from the set $\lbrace 0,-1 \rbrace$. \\
Therefore for all integers $m$ lying between $1$ and $q_n$ there exists a solution to the equation (\ref{mainc3}) from the set $\lbrace 0,-1 \rbrace$. We have completed the proof for the case when $n$ is even. 

Let us give an argument, albeit briefly, to show that an entirely similar process applies for odd $n$. When $n$ is odd, the interval $[2-F_{n-2}-s_n, 2+F_{n-2}+r_n]$ will be divided into three sub-intervals, as earlier. Then Case $1$ proceeds in an entirely similar manner. In Case $2$, the matrix $C_1$ should be replaced by $C_2$ in the block matrix $A$. Then equation (80) will be written in the form of (81) using the column sums of $C_2^{-2}$. Then, instead of $p_n$, one should use $r_n$ and the process proceeds along similar lines. In Case $3$, the block entry in $C_3$ in the matrix $B$ should be replaced by $C_4$ and the analysis follows in an exact same manner. Hence the entire process for the odd case proceeds along similar lines to the case when $n$ is even.

This completes the proof. 
\end{proof}

\begin{remarks}
The converse of Theorem (\ref{mainresult}) is not true. Le $x^{n-1}$ and $z^{n-1}$ be as defined there. First let $n$ be even $(n \geq 8)$. Consider the $n \times n$ matrix
\[P=\left( \begin{array}{ccc}
C_4  & z^{n-1} & z^{n-1}\\
 0 & 0 & 0\\
  0 & 0 & 0
\end{array} \right), \]
where $C_4 \in \mathbb{R}^{(n-2)\times (n-2)}$. Then $P$ is an upper triangular, $\lbrace 0,1 \rbrace$, group invertible, singular matrix. One may verify that $S(P^\#)= 2-F_{n-3}-2s_{n-1}$. We have,\\
$S(P^\#)-( 2-F_{n-2}-q_n)$
\begin{eqnarray}
&=&F_{n-4}+q_n-2s_{n-1}\nonumber\\
&=& F_{n-4}+\sum_{k=1}^{\frac{n-4}{2}}\beta_{2k+1}+\beta_{n-3}+\beta_{n-3}-2\sum_{k=1}^{\frac{n-4}{2}}\beta_{2k+1}-2\beta_{n-3}\nonumber\\
&=&F_{n-4}-\sum_{k=1}^{\frac{n-4}{2}}\beta_{2k+1}\nonumber\\
&=&F_{n-4}-\sum_{k=1}^{\frac{n-6}{2}}\beta_{2k+1}-\beta_{n-3}\nonumber
\end{eqnarray}
But, $\beta_{n-3}=\vert e^T(C_3^{-2})e^{n-3}\vert >F_{n-4}$, showing that $S(P^\#)< 2-F_{n-2}-q_n$.

Next, consider the case when $n$ is odd $(n \geq 7)$. Consider the $n\times n$ matrix
\[Q=\left( \begin{array}{ccc}
C_1 & x^{n-1} & x^{n-1}\\
 0 & 0 & 0\\
  0 & 0 & 0
\end{array} \right), \]
where $C_1 \in \mathbb{R}^{(n-2)\times (n-2)}$. Then $Q$ is an upper triangular, $\lbrace 0,1 \rbrace$, group invertible, singular matrix. Again, one may verify that $S(Q^\#)= 2+F_{n-3}+2p_{n-1}$. Now,
\begin{eqnarray}
S(Q^\#)-2+F_{n-2}+r_n&=& -F_{n-4}+2p_{n-1}-r_n\nonumber\\
&=& -F_{n-4}+2\alpha_1+2\sum_{i=1}^{\frac{n-3}{2}}\alpha_{2i}+2\alpha_{n-3}-\alpha_1\nonumber
\end{eqnarray}
\begin{eqnarray}
&-& \sum_{i=1}^{\frac{n-3}{2}}\alpha_{2i}-\alpha_{n-3}-\alpha_{n-3}\nonumber\\
&=& -F_{n-4}+\alpha_1+\sum_{i=1}^{\frac{n-3}{2}}\alpha_{2i}\nonumber\\
&=&-F_{n-4}+\alpha_1+\sum_{i=1}^{\frac{n-5}{2}}\alpha_{2i}+\alpha_{n-3}\nonumber
\end{eqnarray}
However, $\alpha_{n-3}=\vert e^TC_2^{-2}e^{n-3} \vert >F_{n-4}$, showing that $S(Q^\#)> 2+F_{n-2}+r_n$.

For $n=6$, consider 
\[A=\left( \begin{array}{cccccc}
1 & 0 & 1 & 1 & 0 & 0\\
0 & 1 & 1 & 1 & 0 & 0\\
 0 & 0 & 1 & 0 & 1 & 1\\
  0 & 0 & 0 & 1 & 1 & 1\\
  0 & 0 & 0 & 0 & 0 & 0 \\
  0 & 0 & 0 & 0 & 0 & 0 
\end{array} \right). \]
Then, \[A^\#=\left( \begin{array}{cccccc}
1 & 0 & -1 & -1 & -4 & -4\\
0 & 1 & -1 & -1 & -4 & -4\\
 0 & 0 & 1 & 0 & 1 & 1\\
  0 & 0 & 0 & 1 & 1 & 1\\
  0 & 0 & 0 & 0 & 0 & 0 \\
  0 & 0 & 0 & 0 & 0 & 0 
\end{array} \right). \] 
$S(A^\#)=-12$. However, $2-F_4-q_6=-1-q_6$ where $q_6=3\beta_3=9$, by the formula for $q_n$. Hence, $S(A^\#)<2-F_4-q_6$.
\end{remarks}

\section{Appendix}
{\bf Proof of Lemma (\ref{c1}):} 
 \begin{eqnarray}
 C_1X(e^1)=C_1e^1=e^1 \quad \textrm{and}\quad C_1X(e^2)=C_1e^2=e^2.\nonumber
  \end{eqnarray}
  Let $k$ be odd, ($3\leq k \leq n-3$). Then
  \begin{eqnarray}    
 C_1X(e^k)&=&C_1(-F_{k-2} e^1 + \sum_{i=2}^{k-1}{(-1)^{i+1}F_{k-i}e^i + e^k})\nonumber\\
 &=&-F_{k-2}(C_1e^1) + \sum_{i=2}^{k-1}{(-1)^{i+1}F_{k-i}(C_1e^i) + (C_1e^k)}\nonumber\\
&=&-F_{k-2}e^1 - \sum_{i=1}^{\frac{k-1}{2}}F_{k-2i}(C_1e^{2i}) + \sum_{i=1}^{\frac{k-3}{2}}F_{k-(2i+1)}(C_1e^{2i+1}) + (C_1e^k)\nonumber\\
&=&e^1(-F_{k-2}+\sum_{p=1}^{\frac{k-3}{2}}{F_{2p}}+1) + \sum_{r=1}^{\frac{k-3}{2}}{e^{2r+1}(F_{k-(2r+1)} - \sum_{p=1}^{\frac{k-(2r+1)}{2}}F_{2p-1})}\nonumber
\end{eqnarray}
 \begin{eqnarray}
  &+&\sum_{r=1}^{\frac{k-1}{2}}{e^{2r}(-F_{k-(2r)}+\sum_{p=1}^{\frac{k-(2r+1)}{2}}{F_{2p}}+1)} +  e^k.
\nonumber\\
&=&e^k\nonumber
\end{eqnarray}
Let $k$ be even, ($4\leq k \leq n-2 $). Then,\\
$C_1X(e^k)$
\begin{eqnarray}
&=&C_1(F_{k-2} e^1 + \sum_{i=2}^{k-1}{(-1)^{i}F_{k-i}e^i + e^k})\nonumber\\
&=&F_{k-2}(C_1e^1) + \sum_{i=1}^{\frac{k-2}{2}}F_{k-2i}(C_1e^{2i}) - \sum_{i=1}^{\frac{k-2}{2}}F_{k-(2i+1)}(C_1e^{2i+1}) + (C_1e^k)\nonumber\\
&=&e^1(F_{k-2}-\sum_{p=1}^{\frac{k-2}{2}}{F_{2p-1}}) + \sum_{r=1}^{\frac{k-2}{2}}{e^{2r+1}(-F_{k-(2r+1)} + \sum_{p=1}^{\frac{k-(2r+2)}{2}}F_{2p}+1)} \nonumber\\
&+& \sum_{r=1}^{\frac{k-2}{2}}{e^{2r}(F_{k-(2r)}-\sum_{p=1}^{\frac{k-(2r)}{2}}{F_{2p-1}})} +  e^k\nonumber\\
&=&e^k.\nonumber
\end{eqnarray}
$ C_1X(e^{n-1})$
 \begin{eqnarray}
&=&C_1(F_{n-4}e^1 + \sum_{i=2}^{n-3}{(-1)^{i}F_{n-2-i}e^i} + e^{n-1})\nonumber\\
&=&F_{n-4}(C_1e^1) + \sum_{i=2}^{n-3}{(-1)^{i}F_{n-2-i}(C_1e^i) + (C_1e^{n-1})}\nonumber\\
&=&F_{n-4}(e^1) + \sum_{i=1}^{\frac{n-4}{2}}F_{n-2-2i}(C_1e^{2i}) - \sum_{i=1}^{\frac{n-4}{2}}F_{n-2-(2i+1)}(C_1e^{2i+1}) + (C_1e^{n-1})\nonumber\\
&=&e^1(F_{n-4}-\sum_{p=1}^{\frac{n-4}{2}}{F_{2p-1}}) + \sum_{r=1}^{\frac{n-4}{2}}{e^{2r+1}(-F_{n-2-(2r+1)} + \sum_{p=1}^{\frac{n-2-(2r+2)}{2}}F_{2p}+1)} \nonumber
\end{eqnarray}
\begin{eqnarray}
&+& \sum_{r=1}^{\frac{n-4}{2}}{e^{2r}(F_{n-2-(2r)}-\sum_{p=1}^{\frac{n-2-(2r)}{2}}{F_{2p-1}})} +  e^{n-1}\nonumber\\
&=&e^{n-1}.\nonumber
\end{eqnarray}
Again,
\begin{eqnarray}
XC_1(e^1)=X(e^1)=e^1\quad \textrm{and}\quad XC_1(e^2)=X(e^2)=e^2.\nonumber
\end{eqnarray}
Let $k$ be odd, ($3\leq k \leq n-3$). Then,
\begin{eqnarray}
XC_1(e^k)&=&X(e^1+\sum_{i=1}^{\frac{k-1}{2}}e^{2i}+e^k)\nonumber \\
&=& e^1+\sum_{i=1}^{\frac{k-1}{2}} Xe^{2i}+Xe^k\nonumber \\
&=& e^1+\sum_{i=1}^{\frac{k-1}{2}}(F_{2i-2}e^1+\sum_{j=2}^{2i-1}(-1)^j F_{2i-j}e^j+e^{2i})\nonumber\\&+&(-F_{k-2}e^1+\sum_{i=2}^{k-1}(-1)^{i+1}F_{k-i}e^i+e^k)\nonumber\\
&=& e^1(1+\sum_{j=1}^{\frac{k-3}{2}}F_{2j}-F_{k-2})+\sum_{r=1}^{\frac{k-1}{2}}e^{2r}(1+\sum_{j=1}^{\frac{k-(2r+1)}{2}}F_{2j}-F_{k-2r})\nonumber\\
 &+ & \sum_{r=1}^{\frac{k-3}{2}}e^{2r+1}(F_{k-(2r+1)}-\sum_{j=1}^{\frac{k-(2r+1)}{2}}F_{2j-1})+e^k\nonumber\\
 &=& e^k.\nonumber
\end{eqnarray}
When $k$ is even, ($4\leq k \leq n-2$), we have:
\begin{eqnarray}
XC_1(e^k)&=& X(\sum_{i=1}^{\frac{k-2}{2}}e^{2i+1}+e^k)\nonumber\\
&=&\sum_{i=1}^{\frac{k-2}{2}}Xe^{2i+1}+Xe^k\nonumber\\
&=&\sum_{i=1}^{\frac{k-2}{2}}(-F_{2i-1}e^1+\sum_{j=2}^{2i} (-1)^{j+1}F_{2i-j+1}e^j+e^{2i+1})\nonumber\\
&+& (F_{k-2}e^1+\sum_{i=2}^{k-1}(-1)^iF_{k-i}e^i+e^k)\nonumber
\end{eqnarray}
\begin{eqnarray}
&=& e^1(-\sum_{i=1}^{\frac{k-2}{2}}F_{2i-1}+F_{k-2})+\sum_{r=1}^{\frac{k-2}{2}}e^{2r}(F_{k-2r}-\sum_{j=1}^{\frac{k-2r}{2}}F_{2j-1})\nonumber\\
&+& \sum_{r=1}^{\frac{k-2}{2}}e^{2r+1}(1+\sum_{j=1}^{\frac{k-(2r+2)}{2}}F_{2j}-F_{k-(2r+1)})+e^k\nonumber\\
&=& e^k.\nonumber\\
XC_1(e^{n-1})&=&X(\sum_{i=1}^{\frac{n-4}{2}}e^{2i+1}+e^{n-1})\nonumber\\
&=&\sum_{i=1}^{\frac{n-4}{2}}Xe^{2i+1}+Xe^{n-1}\nonumber\\
&=&\sum_{i=1}^{\frac{n-4}{2}}(-F_{2i-1}e^1+\sum_{j=2}^{2i} (-1)^{j+1}F_{2i-j+1}e^j+e^{2i+1})\nonumber\\
&+& (F_{n-4}e^1+\sum_{i=2}^{n-3}(-1)^iF_{n-2-i}e^i+e^{n-1})\nonumber\\
&=& e^1(-\sum_{i=1}^{\frac{n-4}{2}}F_{2i-1}+F_{n-4})+\sum_{r=1}^{\frac{n-4}{2}}e^{2r}(F_{n-2-2r}-\sum_{j=1}^{\frac{n-2-2r}{2}}F_{2j-1})\nonumber\\
&+& \sum_{r=1}^{\frac{n-4}{2}}e^{2r+1}(1+\sum_{j=1}^{\frac{n-2-(2r+2)}{2}}F_{2j}-F_{n-2-(2r+1)})+e^{n-1}\nonumber\\
&=& e^{n-1}.\nonumber
\end{eqnarray}
This completes the proof of Lemma (\ref{c1}). 

The proofs of Lemma (\ref{c2}), Lemma (\ref{c3}) and Lemma (\ref{c4}) are similar and are skipped.

\noindent {\bf Proof of Lemma (\ref{colsum1}):} \\
Using the formulae for $C_1^{-1}$ from Lemma (\ref{c1}), one has 
\begin{eqnarray}
e^TC_1^{-1}e^1&=&e^Te^1=1\quad \textrm{and}\quad e^TC_1^{-1}e^2=e^Te^2=1.\nonumber
\end{eqnarray}
For odd $k$ ($3\leq k \leq n-3$),
\begin{eqnarray}
e^TC_1^{-1}e^k&=&-F_{k-2}+\sum_{i=2}^{k-1}{(-1)^{i+1}F_{k-i}+1}\nonumber\\
&=&-F_{k-2}-\sum_{i=1}^{\frac{k-3}{2}}{(F_{k-2i}-F_{k-(2i+1)})}\nonumber\\
&=&-F_{k-2}-\sum_{i=1}^{\frac{k-3}{2}}{F_{k-(2i+2)}}\nonumber\\
&=&-\sum_{i=1}^{\frac{k-1}{2}}{F_{k-2i}}\nonumber\\
&=&-F_{k-1}.  \nonumber
\end{eqnarray}
for even $k$ ($4\leq k\leq n-2$),
\begin{eqnarray}
e^TC_1^{-1}e^k&=&F_{k-2}+\sum_{i=2}^{k-1}{(-1)^{i}F_{k-i}+1}\nonumber\\
&=&F_{k-2}+\sum_{i=1}^{\frac{k-2}{2}}{(F_{k-2i}-F_{k-(2i+1)})}+1\nonumber
\end{eqnarray}
\begin{eqnarray}
&=&F_{k-2}+\sum_{i=1}^{\frac{k-2}{2}}F_{k-(2i+2)}+1\nonumber\\
&=&\sum_{i=1}^{\frac{k}{2}}{F_{k-2i}}+1\nonumber\\&=&F_{k-1}.\nonumber
\end{eqnarray}
\begin{eqnarray}
e^TC_1^{-1}e^{n-1}&=&F_{n-4}+\sum_{i=2}^{n-3}{(-1)^{i}F_{n-2-i}+1}\nonumber\\
&=&F_{n-4}+\sum_{i=1}^{\frac{n-4}{2}}{(F_{n-2-2i}-F_{n-2-(2i+1)})+1}\nonumber\\
&=&F_{n-4}+\sum_{i=1}^{\frac{n-4}{2}}F_{n-2-(2i+2)}+1\nonumber
\end{eqnarray}
\begin{eqnarray}
&=&\sum_{i=1}^{\frac{n-2}{2}}{F_{n-2-2i}}+1\nonumber\\
&=&F_{n-3}.\nonumber
\end{eqnarray}
By similar computations, the formulae involving $C_2^{-1}$, $C_3^{-1}$ and $C_4^{-1}$ may be shown to hold.

{\bf Proof of Lemma (\ref{sumofall}):} \\
Using Lemma (2.5),
\begin{eqnarray}
S(C_1^{-1})&=&\sum_{i=1}^{n-1} e^TC_1^{-1}e^i\nonumber\\
&=&2-\sum_{i=1}^{\frac{n-4}{2}}F_{2i}+\sum_{i=1}^{\frac{n-4}{2}}F_{2i+1}+F_{n-3}\nonumber\\
&=&2+\sum_{i=1}^{\frac{n-4}{2}}(F_{2i+1}-F_{2i})+F_{n-3}\nonumber\\
&=&2+\sum_{i=1}^{\frac{n-4}{2}}F_{2i-1}+F_{n-3}\nonumber\\
&=&2+F_{n-4}+F_{n-3}\nonumber\\
&=&2+F_{n-2}.\nonumber
\end{eqnarray}
Again by Lemma (2.5),
\begin{eqnarray}
S(C_2^{-1})&=&\sum_{i=1}^{n-1} e^TC_2^{-1}e^i\nonumber\\
&=&2-\sum_{i=1}^{\frac{n-5}{2}}F_{2i}+\sum_{i=1}^{\frac{n-5}{2}}F_{2i+1}+2F_{n-4}\nonumber\\
&=&2+\sum_{i=1}^{\frac{n-5}{2}}F_{2i-1}+2F_{n-4}\nonumber\\
&=&2+F_{n-5}+F_{n-4}+F_{n-4}\nonumber\\
&=&2+F_{n-2}.\nonumber
\end{eqnarray}
In a similar manner, using Lemma (2.6), one may prove that $S(C_3^{-1})=S(C_4^{-1})=2-F_{n-2}$.

{\bf Proof of Lemma (\ref{twoseqres}):}\\
We present proofs for the first set of inequalities. The proofs for the second part are similar and will be omitted. \\
It is clear that $\alpha_1=\alpha_2$. For $k$ even,
\begin{eqnarray}
\alpha_{k+1}-\alpha_k&=&\vert e^TC_1^{-2}e^{k+1}\vert - \vert e^TC_1^{-2}e^k \vert \nonumber\\
&=&F_{k-1}+\sum_{i=1}^{\frac{k}{2}}F_{2i-1}F_{k+1-2i}+\sum_{i=1}^{\frac{k-2}{2}}F_{2i}F_{k+1-(2i+1)}+F_k\nonumber\\
&-&\lbrace F_{k-2}+\sum_{i=1}^{\frac{k-2}{2}}F_{2i-1}F_{k-2i}+\sum_{i=1}^{\frac{k-2}{2}}F_{2i}F_{k-(2i+1)}+F_{k-1}\rbrace\nonumber\\
&=&F_k-F_{k-2}+2\sum_{i=1}^{\frac{k-2}{2}}F_iF_{k-i}+F_{\frac{k}{2}}^2-2\sum_{i=1}^{\frac{k-2}{2}}F_iF_{k-(i+1)}\nonumber\\
&=&F_{k-1}+2\sum_{i=1}^{\frac{k-2}{2}}F_i(F_{k-i}-F_{k-(i+1)})+F_{\frac{k}{2}}^2\nonumber\\
&=&F_{k-1}+2\sum_{i=1}^{\frac{k-2}{2}}F_iF_{k-(i+2)}+F_{\frac{k}{2}}^2.
\end{eqnarray}
Clearly,\quad $\alpha_{k+1}-\alpha_k>0$.\\
For $k$ odd,
\begin{eqnarray}
\alpha_{k+1}-\alpha_k&=&\vert e^TC_1^{-2}e^{k+1}\vert - \vert e^TC_1^{-2}e^k \vert \nonumber\\
&=&F_{k-1}+\sum_{i=1}^{\frac{k-1}{2}}F_{2i-1}F_{k+1-2i}+\sum_{i=1}^{\frac{k-1}{2}}F_{2i}F_{k+1-(2i+1)}+F_k\nonumber\\
&-&\lbrace F_{k-2}+\sum_{i=1}^{\frac{k-1}{2}}F_{2i-1}F_{k-2i}+\sum_{i=1}^{\frac{k-3}{2}}F_{2i}F_{k-(2i+1)}+F_{k-1}\rbrace\nonumber\\
&=&(F_k-F_{k-2})+2\sum_{i=1}^{\frac{k-1}{2}}F_iF_{k-i}-2\sum_{i=1}^{\frac{k-3}{2}}F_iF_{k-(i+1)}-F_{\frac{k-1}{2}}^2\nonumber\\
&=&F_{k-1}+2\sum_{i=1}^{\frac{k-3}{2}}F_i(F_{k-i}-F_{k-(i+1)})+2F_{\frac{k-1}{2}}F_{\frac{k+1}{2}}-F_{\frac{k-1}{2}}^2\nonumber
\end{eqnarray}
\begin{eqnarray}
&=&F_{k-1}+2\sum_{i=1}^{\frac{k-3}{2}}F_iF_{k-(i+2)}+F_{\frac{k-1}{2}}(F_{\frac{k+1}{2}}-F_{\frac{k-1}{2}})\nonumber\\
&+&F_{\frac{k-1}{2}}F_{\frac{k+1}{2}}.
\end{eqnarray}
Clearly, $\alpha_{k+1}-\alpha_k>0$.\\
Now, for even $n$, $2\alpha_1>\alpha_1=\alpha_2$.\\
Next, we prove : $2\alpha_k\geq \alpha_{k+1}$.\\
For $k$ even,
$(\alpha_{k+1}-\alpha_k)-\alpha_k$
\begin{eqnarray}
&=&F_{k-1}+2\sum_{i=1}^{\frac{k-2}{2}}F_iF_{k-(i+2)}+F_{\frac{k}{2}}^2-\lbrace F_{k-2}+\sum_{i=1}^{\frac{k-2}{2}}F_{2i-1}F_{k-2i}\nonumber\\
&+&\sum_{i=1}^{\frac{k-2}{2}}F_{2i}F_{k-(2i+1)}+F_{k-1}\rbrace\nonumber\\
&=&2\sum_{i=1}^{\frac{k-2}{2}}F_iF_{k-(i+2)}+F_{\frac{k}{2}}^2-F_{k-2}-2\sum_{i=1}^{\frac{k-2}{2}}F_iF_{k-(i+1)}\nonumber\\
&=&-2\sum_{i=1}^{\frac{k-2}{2}}F_i(F_{k-(i+1)}-F_{k-(i+2)})-(F_{k-2}-F_{\frac{k}{2}})^2\nonumber
\end{eqnarray}
[If $k=4$, we obtain a negative value and the proof is done. If $k>4$, we go to the next step]
\begin{eqnarray}
&=&-2\sum_{i=1}^{\frac{k-2}{2}}F_iF_{k-(i+3)}-F_{k-2}+(F_{\frac{k-2}{2}}+F_{\frac{k-4}{2}})(F_{\frac{k-2}{2}}+F_{\frac{k-4}{2}})\nonumber\\
&=&-F_{k-2}-2\sum_{i=1}^{\frac{k-4}{2}}F_iF_{k-(i+3)}+F_{\frac{k-2}{2}}^2+F_{\frac{k-4}{2}}^2\nonumber
\end{eqnarray}
[If $k=6$, we obtain a negative value and the proof is done. If $k>6$, we go to the next step.]
\begin{eqnarray}
&=&-F_{k-2}-2\sum_{i=1}^{\frac{k-6}{2}}F_iF_{k-(i+3)}-2F_{\frac{k-4}{2}}F_{\frac{k-2}{2}}+F_{\frac{k-2}{2}}^2+F_{\frac{k-4}{2}}^2\nonumber\\
&=&-F_{k-2}-2\sum_{i=1}^{\frac{k-6}{2}}F_iF_{k-(i+3)}+(F_{\frac{k-2}{2}}-F_{\frac{k-4}{2}})^2\nonumber
\end{eqnarray}
\begin{eqnarray}
&=&-F_{k-2}-2\sum_{i=1}^{\frac{k-8}{2}}F_iF_{k-(i+3)}-2F_{\frac{k-6}{2}}F_{\frac{k}{2}}+F_{\frac{k-6}{2}}^2\nonumber\\
&=&-F_{k-2}-2\sum_{i=1}^{\frac{k-8}{2}}F_iF_{k-(i+3)}-F_{\frac{k-6}{2}}F_{\frac{k}{2}}-F_{\frac{k-6}{2}}(F_{\frac{k}{2}}-F_{\frac{k-6}{2}}).\nonumber
\end{eqnarray}
So, for $k=8$ and consecutive even values, $\alpha_{k+1}-2\alpha_k\leq 0$.\\
Observe that for $k=2$, the formula above does not work. However, $\alpha_1=\alpha_2=1$ and $\alpha_3=2$. So, $2\alpha_2=\alpha_3$.\\
Now for $k$ odd, $(\alpha_{k+1}-\alpha_k)-\alpha_k$
\begin{eqnarray}
&=&F_{k-1}+2\sum_{i=1}^{\frac{k-3}{2}}F_iF_{k-(i+2)}+F_{\frac{k-1}{2}}F_{\frac{k-3}{2}}+F_{\frac{k-1}{2}}F_{\frac{k+1}{2}}\nonumber\\
&-&\lbrace F_{k-2}+\sum_{i=1}^{\frac{k-1}{2}}F_{2i-1}F_{k-2i}+\sum_{i=1}^{\frac{k-3}{2}}F_{2i}F_{k-(2i+1)}+F_{k-1}\rbrace\nonumber\\
&=&-F_{k-2}+2\sum_{i=1}^{\frac{k-3}{2}}F_iF_{k-(i+2)}+F_{\frac{k-1}{2}}F_{\frac{k-3}{2}}+F_{\frac{k-1}{2}}F_{\frac{k+1}{2}}\nonumber\\
&-&2\sum_{i=1}^{\frac{k-3}{2}}F_iF_{k-(i+1)}-F_{\frac{k-1}{2}}^2\nonumber\\
&=&-F_{k-2}-2\sum_{i=1}^{\frac{k-3}{2}}F_iF_{k-(i+3)}+F_{\frac{k-1}{2}}F_{\frac{k-3}{2}}+F_{\frac{k-1}{2}}(F_{\frac{k-1}{2}}+F_{\frac{k-3}{2}})\nonumber\\&-&F_{\frac{k-1}{2}}^2\nonumber\\
&=&-F_{k-2}-2\sum_{i=1}^{\frac{k-3}{2}}F_iF_{k-(i+3)}+2F_{\frac{k-1}{2}}F_{\frac{k-3}{2}}\nonumber
\end{eqnarray}
[If $k=3$ or $k=5$, one obtains a negative value and the proof is done. Otherwise, we go to the next step.]
\begin{eqnarray}
&=&-F_{k-2}-2\sum_{i=1}^{\frac{k-5}{2}}F_iF_{k-(i+3)}-2F_{\frac{k-3}{2}}^2+2F_{\frac{k-1}{2}}F_{\frac{k-3}{2}}\nonumber\\
&=&-F_{k-2}-2\sum_{i=1}^{\frac{k-5}{2}}F_iF_{k-(i+3)}+2F_{\frac{k-3}{2}}(F_{\frac{k-1}{2}}-F_{\frac{k-3}{2}})\nonumber
\end{eqnarray}
\begin{eqnarray}
&=&-F_{k-2}-2\sum_{i=1}^{\frac{k-7}{2}}F_iF_{k-(i+3)}-2F_{\frac{k-5}{2}}F_{\frac{k-1}{2}}+2F_{\frac{k-3}{2}}F_{\frac{k-5}{2}}\nonumber\\
&=&-F_{k-2}-2\sum_{i=1}^{\frac{k-7}{2}}F_iF_{k-(i+3)}-2F_{\frac{k-5}{2}}(F_{\frac{k-1}{2}}-F_{\frac{k-3}{2}}).\nonumber
\end{eqnarray}
So, for $k=7$ and consecutive odd values, $\alpha_{k+1}-2\alpha_k\leq 0$.

\noindent{\bf Acknowledgements}\\
The first author acknowledges funds received from MATRICS (MTR/2018/001132) of SERB, Government of India. The authors thank Professor Ajit Iqbal Singh for her many comments and suggestions on an earlier dratft. They also thank Sushmitha and Samir Mondal for patiently verifying many of the proofs.

\end{document}